\newtheorem{theorem}{Theorem} 
\newtheorem{lemma}[theorem]{Lemma}
\newtheorem{corollary}[theorem]{Corollary}
\newtheorem{proposition}[theorem]{Proposition}
\theoremstyle{remark} 
\newtheorem{example}[theorem]{Example}
\newtheorem{remark}[theorem]{Remark}
\newtheorem{question}[theorem]{Question}
\DeclareMathOperator{\Goper}{gaps} 
\DeclareMathOperator{\SEoper}{smalls} 
\DeclareMathOperator{\PFoper}{PF} 
\DeclareMathOperator{\toper}{t} 
\DeclareMathOperator{\calGFoper}{\mathcal{GF}} 
\DeclareMathOperator{\calEFoper}{\mathcal{EF}} 
\DeclareMathOperator{\calSoper}{\mathcal{S}} 
\newcommand{\PFsf}{\ensuremath{\mathsf{PF}}}
\newcommand{\typesf}{\ensuremath{\mathsf{type}}}
\newcommand{\frobsf}{\ensuremath{\mathsf{frob}}}
\title[Algorithm for pseudo-Frobenius]
{Numerical semigroups with a given set of pseudo-Frobenius numbers}
\author{M. Delgado} 
\address{CMUP, Departamento de Matem\'atica, Faculdade de
  Ci\^encias, Universidade do Porto, Rua do Campo Alegre 687, 4169-007 Porto,
  Portugal} 
\email{mdelgado@fc.up.pt} 
\thanks{The first author was partially supported by CMUP (UID/MAT/00144/2013), which is funded by FCT (Portugal) with national (MEC) and European structural funds through the programs FEDER, under the partnership agreement PT2020. He also acknowledges the hospitality of the University of Granada during the various visits made, in particular one supported by the GENIL SSV program}
\author{P. A. Garc\'{\i}a-S\'{a}nchez} 
\address{Departamento de \'Algebra,
  Universidad de Granada, 18071 Granada, Espa\~na} 
\email{pedro@ugr.es}
\author{A. M. Robles-P\'erez} 
\address{Departamento de Matem\'atica Aplicada,
  Universidad de Granada, 18071 Granada, Espa\~na} 
\email{arobles@ugr.es}
\thanks{The second and third authors are supported by the projects
  MTM2010-15595, FQM-343 and FEDER funds. The third author is also supported by the Plan Propio (Plan de Fortalecimiento de los Grupos de Investigaci\'on) of the Universidad de Granada}
\keywords{numerical semigroups, algorithms, pseudo-Frobenius numbers, irreducible numerical semigroups}
\subjclass[2010]{11D07, 20M14, 20--04}
\begin{document}
\begin{abstract}
The pseudo-Frobenius numbers of a numerical semigroup are those gaps of the numerical semigroup that are maximal for the partial order induced by the semigroup.
We present a procedure to detect if a given set of integers is the set of pseudo-Frobenius numbers of a numerical semigroup and, if so, to compute the set of all numerical semigroups having this set as set of pseudo-Frobenius numbers.
\end{abstract}
\maketitle
\section{Introduction}
\label{sec:introduction}
Let $S$ be a numerical semigroup, that is, a cofinite submonoid of $(\mathbb N,+)$, where $\mathbb N$ stands for the set of nonnegative integers.

An integer $x$ is said to be the \emph{Frobenius number} of $S$ (respectively,
a \emph{pseudo-Frobenius number} of $S$) if $x\not\in S$ and $x+s\in S$, for all $s\in \mathbb{N}\setminus{\{0\}}$ (respectively, for all $s\in S\setminus{\{0\}}$).

Given a positive integer $f$, there exist numerical semigroups whose Frobenius number is $f$. One example of such a semigroup is the semigroup $\{0,f+1,\to\}$ containing $0$ and all the integers greater than $f$.
There are several algorithms to compute all the numerical semigroups with a given Frobenius number (the fastest we know is based on \cite{huecos-fun}).

We denote by $\mathrm{F}(S)$ the Frobenius number of $S$ and by $\PFoper(S)$ the set of pseudo-Frobenius numbers of $S$. The cardinality of $\PFoper(S)$ is said to be the \emph{type} of $S$ and is denoted by $\toper(S)$.

A positive integer that does not belong to $S$ is said to be a \emph{gap} of $S$ and an element of $S$ that is not greater than $\mathrm{F}(S)+1$ is said to be a \emph{small element} of $S$. To denote the set $\mathbb{N}\setminus S$ of gaps of $S$ we use $\Goper(S)$ and to denote the set of small elements of $S$ we use $\SEoper(S)$. Since a set of gaps must contain the divisors of all its members and a set of small elements must contain all multiples of its members (up to its maximum), it is clear that there are sets of positive integers that can not be the set of gaps or the set of small elements of a numerical semigroup. The set of gaps, as well as the set of small elements, completely determines the semigroup. Observe that when some elements or some gaps are known, others may be forced. For instance, a gap forces all its divisors to be gaps. 
\medskip

Let $n$ be a positive integer and let $\PFsf = \{g_1,g_2,\ldots,g_{n-1}, g_n\}$ be a set of positive integers.
Denote by $\calSoper(\PFsf)$ the set of numerical semigroups whose set of pseudo-Frobenius numbers is \PFsf. When $n>1$, $\calSoper(\PFsf)$ may clearly be empty. Moreover, when non-empty, it is finite. In fact, $\calSoper(\PFsf)$ consists of semigroups whose Frobenius numbers is the maximum of \PFsf.
 Some questions arise naturally. Among them, we can consider the following.
\begin{question}\label{quest:existence}
Find conditions on the set \PFsf\ that ensure that $\calSoper(\PFsf)\ne \emptyset$.
\end{question}
\begin{question}\label{quest:compute_all}
Find an algorithm to compute $\calSoper(\PFsf)$.
\end{question} 
Both questions have been solved for the case that the set \PFsf\ consists of a single element (which must be the Frobenius number of a numerical semigroup; symmetric numerical semigroups) or when \PFsf\ consists on an even positive integer $f$ and $f/2$ (pseudo-symmetric numerical semigroups), see \cite{br}.  
Moreover, Question~\ref{quest:existence} was solved by Robles-P\'erez and Rosales~\cite{RoblesRosales_preprint:type2} in the case where \PFsf\ consists of $2$ elements (not necessarily of the form $\{f,f/2\}$).

The set $\calSoper(\PFsf)$ can be computed by filtering those semigroups that have \PFsf\ as set of pseudo-Frobenius numbers, from the numerical semigroups whose Frobenius number is the maximum of \PFsf\ (cf. Example~\ref{example:first}). Due, in part, to the possibly huge number of semigroups with a given Frobenius number, this is a rather slow procedure and we consider it far from being a satisfactory answer to Question~\ref{quest:compute_all}. 

Irreducible numerical semigroups with odd  Frobenius number correspond with symmetric numerical semigroups, and those with even Frobenius number with pseudo-symmetric (see for instance  \cite[Chapter 3]{NS}).  Bresinsky proved in \cite{bres-sym} that symmetric numerical semigroups with embedding dimension four have minimal presentations of cardinality 5 or 3 (complete intersections). Symmetry of a numerical semigroup $S$ translates to having $\{\mathrm F(S)\}$ as set of pseudo-Frobenius numbers. Later Komeda, \cite{komeda}, was able to prove the same result for pseudo-symmetric numerical semigroups (though he used different terminology for this property; in this setting 3 does not occur since pseudo-symmetric are never complete intersections). A numerical semigroup $S$ is pseudo-symmetric if its set of pseudo-Frobenius numbers is $\{\mathrm F(S), \mathrm F(S)/2\}$. It should be interesting to see the relationship with the type and the cardinality of a minimal presentation, and thus having tools to find semigroups with given sets of pseudo-Frobenius numbers becomes helpful. Watanabe and his students Nari and Numata are making some progress in the study of this relationship.

\subsection{Contents}
\label{subsec:organization}

We present two different procedures to determine the set of all numerical semigroups with a given set of pseudo-Frobenius numbers. One exploits the idea of irreducible numerical semigroup. From each irreducible numerical semigroup we start removing minimal generators with certain properties to build a tree whose leafs are the semigroups we are looking for. The other approach is based on determining the elements and gaps of any numerical semigroup with the given set of pseudo-Frobenius numbers, obtaining in this way a list of ``free'' integers. We then construct a binary tree in which branches correspond to assuming that these integers are either gaps or elements.

We start this work with some generalities and basic or well known results and connections with the \texttt{GAP}~\cite{GAP4} package \texttt{numericalsgps}~\cite{numericalsgps} (Sections~\ref{sec:generalities} and~\ref{sec:connections_numericalsgps}). Then we describe a procedure to compute forced integers (Sections~\ref{sec:preliminary_forced_ints} and~\ref{sec:computing_forced_ints}). As computing forced integers is fast and leads frequently to the conclusion that there exists no semigroup fulfilling the condition of having the given set as set of pseudo-Frobenius numbers, this approach benefits from the work done in the preceding sections.  
The following section of the paper is devoted to the above-mentioned approach of constructing a tree with nodes lists of integers, which turns out to be faster most of the times than the one based on irreducible numerical semigroups. Nevertheless, besides being useful to compare results, the method using irreducible numerical semigroups is of theoretical importance so we decided to keep it, and it is given in Appendix~\ref{sec:computing_arbitrary_type}.
The next section describes an algorithm that (increasing the number of attempts, if necessary) returns one numerical semigroup with the given set of pseudo-Frobenius numbers, when such a semigroup exists. Although it may be necessary to increase the number of attempts, usually does not require the computation of all the semigroups fulfilling the condition and is reasonably fast in practice (Section~\ref{sec:random}).

We give pseudo-code for the various algorithms that are used in the approach of lists of free integers. The pseudo-code given has an immediate translation to the \textsf{GAP} language, which is the programming language we used to implement these algorithms. We observe that it is a high level programming language. Note that we take advantage of the existence of the \textsf{GAP} package \textsf{numericalsgps} for computing with numerical semigroups. The names used for the functions described here are slightly different from the ones used in the package, since there longer names are required to be accurate with the variable names policy in \textsf{GAP}. Essentially, it will be a matter of adding a suffix to the names used for the package.

Many examples are given throughout the paper, some to illustrate the methods proposed, while others are included to motivate the options followed. In some of the examples we show the output obtained in a \textsf{GAP} session. These usually correspond to examples that are not suitable to a full computation just using a pencil and some paper; furthermore, indicative running times, as given by \textsf{GAP}, are shown (mainly in Section~\ref{sec:running_times}).

A new version of the \textsf{numericalsgps} package, including implementations of the algorithms developed in the present work, is released at the same time this work is made public. The implementations can be checked (the software in question is open source; see the links in the references) and can be used for testing examples.

\section{Generalities and basic results}
\label{sec:generalities}
Throughout the paper we will consider often a set $\PFsf= \{g_1,g_2,\ldots,g_{n-1},g_n\}$ of positive integers. We will usually require it to be ordered, that is, we will assume that $g_1<g_2<\cdots <g_{n-1}<g_n$. 
For convenience, we write  $\PFsf= \{g_1<g_2<\cdots <g_{n-1}<g_n\}$ in this case.

We denote by \textsf{frob} the maximum of \PFsf\ and by \typesf\ the cardinality of \PFsf. Note that if $S\in \calSoper(\PFsf)$, then $\frobsf=g_n=\mathrm{F}(S)$ and $\typesf=n=\toper(S)$.

\subsection{Forced integers}
\label{subsec:forced_integers}
We say that an integer $\ell$ is a \emph{gap forced by} \PFsf\ or a \PFsf-\emph{forced gap} if $\ell$ is a gap of all the numerical semigroups having \PFsf\ as set of pseudo-Frobenius numbers. In particular, if there is no semigroup with \PFsf\ as set of pseudo-Frobenius numbers, then every nonnegative integer is a gap forced by \PFsf. We use the notation $\calGFoper(\PFsf)$ to denote the set of \PFsf-forced gaps. 
In symbols: $\calGFoper(\PFsf)= \bigcap_{S\in \calSoper(\PFsf)}\Goper(S)$.

In a similar way, we say that an integer $\ell$ is an \emph{element forced by} \PFsf\ or a
\PFsf-\emph{forced element} if $\ell$ is an element of all semigroups in $\calSoper(\PFsf)$. We use the notation $\calEFoper(\PFsf)$ to denote the set of (small) \PFsf-forced elements. 
In symbols: $\calEFoper(\PFsf)= \bigcap_{S\in \calSoper(\PFsf)}\SEoper(S)$. Note also that if $\calSoper(\PFsf)=\emptyset$, then $\calEFoper(\PFsf) = \mathbb{N}$.

The union of the \PFsf-forced gaps and \PFsf-forced elements is designated by \PFsf-\emph{forced integers}.

The following is a simple, but crucial observation.
\begin{proposition}\label{prop:intersection_gaps_elements}
$\calSoper(\PFsf)\ne \emptyset$ if and only if $\calGFoper(\PFsf)\cap \calEFoper(\PFsf)=\emptyset$.
\end{proposition}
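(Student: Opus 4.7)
The plan is to prove both implications essentially by direct inspection, leveraging the conventions that fix $\calGFoper(\PFsf)$ and $\calEFoper(\PFsf)$ when $\calSoper(\PFsf)$ happens to be empty. The only substantive observation needed is that for every numerical semigroup $S$, the sets $\Goper(S)$ and $\SEoper(S)$ are disjoint: by definition, $\SEoper(S) \subseteq S$, whereas $\Goper(S) = \mathbb{N}_{>0}\setminus S$.

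For the forward direction, I would fix an arbitrary $S \in \calSoper(\PFsf)$ (which exists by hypothesis). Straight from the definitions of forced gaps and forced elements,
\[
\calGFoper(\PFsf) \;\subseteq\; \Goper(S) \qquad \text{and} \qquad \calEFoper(\PFsf) \;\subseteq\; \SEoper(S),
\]
and so $\calGFoper(\PFsf) \cap \calEFoper(\PFsf) \subseteq \Goper(S) \cap \SEoper(S) = \emptyset$.

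For the converse I would argue by contrapositive: if $\calSoper(\PFsf) = \emptyset$, then by the conventions recorded just before the statement (the empty intersections are taken to be $\mathbb{N}$), we have $\calGFoper(\PFsf) = \mathbb{N} = \calEFoper(\PFsf)$, and in particular $\calGFoper(\PFsf) \cap \calEFoper(\PFsf) = \mathbb{N} \neq \emptyset$.

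I do not anticipate any real obstacle: the argument is a bookkeeping proof whose whole content is making the empty-intersection convention do its job. What gives the proposition weight is not its proof but its algorithmic use, as it reduces the existence question for $\calSoper(\PFsf)$ to checking a disjointness condition on two sets that, as the following sections will show, can be computed without first enumerating candidates in $\calSoper(\PFsf)$.
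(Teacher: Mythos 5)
Your proof is correct and follows exactly the paper's argument: the forward direction by intersecting inside $\Goper(S)\cap\SEoper(S)=\emptyset$ for any $S\in\calSoper(\PFsf)$, and the converse via the convention that an empty $\calSoper(\PFsf)$ makes both forced sets equal to $\mathbb{N}$. No differences worth noting.
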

\begin{proof}
If $\calSoper(\PFsf)= \emptyset$, then all nonnegative integers are at the same time gaps and elements forced by \PFsf. Conversely, assume that $\calSoper(\PFsf)\ne \emptyset$ and let $S\in \calSoper(\PFsf)$. Then $\calGFoper(\PFsf)\cap\calEFoper(\PFsf)\subseteq \Goper(S)\cap \SEoper(S) = \emptyset$.
\end{proof}

Frequently the prefix \PFsf\ is understood and we will abbreviate by saying just \emph{forced gap}, \emph{forced element} or \emph{forced integer}. 

Let $G$ and $E$ be, respectively, sets of forced gaps and forced elements. The elements $v\in \{1,\ldots,\frob\}$ that do not belong to $G\cup E$ are said to be \emph{free integers for} $(G,E)$. When the pair $(G,E)$ is understood, we simply call \emph{free integer} to a free integer for $(G,E)$. 

\subsection{Well known results}
\label{subsec:general_results}
The partial order $\le_S$ induced by the numerical semigroup $S$ on the integers is defined as follows: $x\le_S y$ if and only if $y-x\in S$.
The following result is well known and will be used several times throughout this paper. 
\begin{lemma}\cite[Lemma~2.19]{NS}\label{lemma:maximals}
Let $S$ be a numerical semigroup. Then
\begin{enumerate}[(i)]
\item $\PFoper(S)= \mathrm{Maximals}_{\le_S}(\mathbb{Z}\setminus{S})$,
\item $x\in \mathbb{Z}\setminus{S} \mbox{  if and only if } f-x\in S \mbox{ for some } f\in \PFoper(S)$.
\end{enumerate}
\end{lemma}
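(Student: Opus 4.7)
My plan is to prove the two parts directly from the definitions, using that $S$ is cofinite.

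For part (i), the goal is the set equality $\PFoper(S)=\mathrm{Maximals}_{\le_S}(\mathbb{Z}\setminus S)$, which I would prove by double inclusion. For $\subseteq$, take $x\in\PFoper(S)$; then $x\notin S$, and if there were some $y\in\mathbb{Z}\setminus S$ with $x<_S y$, i.e.\ $s:=y-x\in S\setminus\{0\}$, then the defining property of a pseudo-Frobenius number would force $x+s=y\in S$, a contradiction. So $x$ is maximal in $\mathbb{Z}\setminus S$ with respect to $\le_S$. For $\supseteq$, let $x$ be $\le_S$-maximal in $\mathbb{Z}\setminus S$. Then $x\notin S$, and for any $s\in S\setminus\{0\}$ one has $x\le_S x+s$ with $x+s\neq x$, so by maximality $x+s\in S$; hence $x\in\PFoper(S)$.

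For part (ii), the nontrivial direction is ($\Rightarrow$). Given $x\in\mathbb{Z}\setminus S$, I would argue that the set $U_x=\{y\in\mathbb{Z}\setminus S:x\le_S y\}$ is nonempty (it contains $x$) and finite, the latter because $S$ is cofinite so $\mathbb{Z}\setminus S$ is finite. Pick any $\le_S$-maximal element $f$ of $U_x$; such an $f$ is also maximal in $\mathbb{Z}\setminus S$ (any strict $\le_S$-extension inside $\mathbb{Z}\setminus S$ would still lie above $x$ and hence in $U_x$, contradicting maximality in $U_x$), so by part (i), $f\in\PFoper(S)$, and $f-x\in S$ by construction (including the case $f=x$, where $f-x=0\in S$). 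For ($\Leftarrow$), suppose $f\in\PFoper(S)$ and $f-x\in S$. If $f-x=0$, then $x=f\notin S$. Otherwise $f-x\in S\setminus\{0\}$, and if we assumed $x\in S$, then $f=x+(f-x)$ would be a sum of two nonzero elements of $S$ (treating $x$ as nonzero since $x\geq 1$ is forced by $f-x<f$, or handling $x=0$ trivially since $0\in S$ and $f=f-x\in S$ gives a contradiction), hence $f\in S$, contradicting $f\in\PFoper(S)$.

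I do not expect a serious obstacle; the proof is a direct unpacking of the definitions of $\PFoper(S)$ and of $\le_S$. The only point that requires a moment of care is the existence of the maximal $f$ above $x$ in part (ii), for which cofiniteness of $S$ is essential; without it, $\mathbb{Z}\setminus S$ could have infinite $\le_S$-ascending chains and the argument would fail.
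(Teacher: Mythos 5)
The paper does not prove this lemma; it is quoted as a known result with a citation to \cite[Lemma~2.19]{NS}, so there is no in-paper argument to compare against. Your direct unpacking of the definitions is the standard proof and its overall structure is sound: part (i) by double inclusion is correct as written, and the strategy for (ii)$(\Rightarrow)$ --- take a $\le_S$-maximal element of $U_x=\{y\in\mathbb{Z}\setminus S: x\le_S y\}$ and observe it is maximal in all of $\mathbb{Z}\setminus S$ --- is exactly right.

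Two details need repair, though neither is fatal. First, your justification for the finiteness of $U_x$ is false as stated: $S$ is cofinite in $\mathbb{N}$, not in $\mathbb{Z}$, so $\mathbb{Z}\setminus S$ contains every negative integer and is infinite. The set $U_x$ is nevertheless finite, but for a different reason: if $y\in U_x$ then $y-x\in S\subseteq\mathbb{N}$ forces $y\ge x$, and $y\notin S$ forces $y\le \mathrm{F}(S)$ (any integer exceeding $\mathrm{F}(S)$ lies in $S$); hence $U_x\subseteq\{x,x+1,\ldots,\mathrm{F}(S)\}$. Second, the closing step of (ii)$(\Leftarrow)$ is needlessly contorted and contains an unjustified claim (that ``$x\ge 1$ is forced by $f-x<f$''; the hypothesis $f-x\in S\setminus\{0\}$ gives $x<f$, not $x\ge 1$). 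None of this is needed: $S$ is a submonoid of $(\mathbb{N},+)$, so if $x\in S$ and $f-x\in S$ then $f=x+(f-x)\in S$ regardless of whether either summand is zero, contradicting $f\in\PFoper(S)\subseteq\mathbb{Z}\setminus S$. With these two corrections your proof is complete.
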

It is well known that the type of a numerical semigroup $S$ is upper bounded by the least positive element belonging to $S$, which is known as the \emph{multiplicity} of $S$ and it is denoted by $\mathrm{m}(S)$.
\begin{lemma}\cite[Corollary~2.23]{NS}\label{lemma:type_multiplicity}
	Let $S$ be a numerical semigroup. Then
	$\mathrm{m}(S)\ge \mathrm{t}(S)+1$.
\end{lemma}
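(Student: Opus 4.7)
The plan is to consider the residues of pseudo-Frobenius numbers modulo $m=\moper(S)$, and show that the map $\PFoper(S)\to \mathbb Z/m\mathbb Z$, $f\mapsto f\bmod m$, is injective and misses the class of $0$. This immediately forces $\toper(S)=|\PFoper(S)|\le m-1$.

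First I would observe that every positive multiple $km$ of the multiplicity lies in $S$, since $m\in S\setminus\{0\}$ and $S$ is closed under addition. In particular no nonzero multiple of $m$ can be a pseudo-Frobenius number (pseudo-Frobenius numbers are gaps), so no element of $\PFoper(S)$ is congruent to $0$ modulo $m$.

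Next I would show injectivity of the residue map. Suppose $f,f'\in\PFoper(S)$ with $f<f'$ and $f\equiv f'\pmod m$. Then $f'=f+km$ for some positive integer $k$. Because $f$ is a pseudo-Frobenius number and $m\in S\setminus\{0\}$, by definition $f+m\in S$; adding $m\in S$ repeatedly yields $f+km\in S$, that is, $f'\in S$. This contradicts $f'\in\PFoper(S)\subseteq \mathbb N\setminus S$.

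Combining the two observations, the residues of the pseudo-Frobenius numbers are pairwise distinct and all lie in $\{1,2,\ldots,m-1\}$, giving $\toper(S)\le m-1$, equivalently $\moper(S)\ge \toper(S)+1$. There is no real obstacle here; the only thing to be careful about is making explicit why $f+km\in S$ for all $k\ge 1$ (iterated application of the pseudo-Frobenius property together with closure of $S$ under addition), rather than invoking it in one step.
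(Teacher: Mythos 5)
Your proof is correct. The paper itself gives no argument for this lemma --- it simply cites \cite[Corollary~2.23]{NS} --- so there is nothing to compare line by line; but your residue-counting argument is sound and is essentially the standard proof. Both halves check out: no pseudo-Frobenius number can be a positive multiple of $\moper(S)$ since those lie in $S$, and if $f,f'\in\PFoper(S)$ with $f'=f+k\moper(S)$, $k\ge 1$, then $f+\moper(S)\in S$ by the defining property of pseudo-Frobenius numbers and $f'=(f+\moper(S))+(k-1)\moper(S)\in S$ by closure under addition, a contradiction. So the residues of $\PFoper(S)$ are distinct elements of $\{1,\ldots,\moper(S)-1\}$, giving $\toper(S)\le \moper(S)-1$. (Your argument is in fact a disguised form of the usual Ap\'ery-set proof: $f\mapsto f+\moper(S)$ sends $\PFoper(S)$ injectively into the Ap\'ery set of $\moper(S)$ in $S$ minus $\{0\}$, which has $\moper(S)-1$ elements.)
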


\subsection{Initializing the set of forced gaps}
\label{subsec:starting_forced_gaps}
The maximality of the pseudo-Frobenius numbers of $S$ with respect to $\le_S$ means that they are incomparable with respect to this ordering. In particular, the difference of any two distinct pseudo-Frobenius numbers does not belong to $S$, that is, 
\begin{equation}\label{eq:differences}
\{g_i-g_j\mid i,j\in \{1,\ldots,\toper(S)\}, i> j\} \subseteq \Goper(S).
\end{equation}
This is the underlying idea of the next result. 
\begin{lemma}\label{lemma:pseudo-comb-pseudo}
Let $S$ be a numerical semigroup and suppose that $\PFoper(S)=\{g_1<g_2<\cdots <g_{n-1}<g_n\}$, with $n>1$.
Let $i\in\{2,\ldots,\toper(S)\}$ and $g\in\langle \mathrm{PF(S)}\rangle$ with $g<g_i$. Then $g_i-g\in \Goper(S)$.
\end{lemma}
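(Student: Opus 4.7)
My plan is to argue by induction on the nonnegative integer $g$. The base case $g=0$ is immediate, since then $g_i-g=g_i\in \PFoper(S)\subseteq \Goper(S)$.

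For the inductive step, take $g>0$ in $\langle \PFoper(S)\rangle$ and fix a representation $g=\sum_{k=1}^{n}\alpha_k g_k$ with $\alpha_k\in\mathbb{N}$ and some $\alpha_j\ge 1$. Set $g'=g-g_j$. Then $g'\in \langle \PFoper(S)\rangle$, as witnessed by the representation $(\alpha_j-1)g_j+\sum_{k\ne j}\alpha_k g_k$, and since $g_j>0$ we have $0\le g'<g<g_i$, so the inductive hypothesis applies to $g'$ and yields $g_i-g'\notin S$.

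Now suppose, for contradiction, that $g_i-g\in S$. Since $g<g_i$, this is a strictly positive element of $S$. Because $g_j$ is a pseudo-Frobenius number of $S$, the defining absorption property $g_j+s\in S$ for every $s\in S\setminus\{0\}$ gives $g_j+(g_i-g)\in S$. Rewriting,
\[ g_j+(g_i-g)=g_i-(g-g_j)=g_i-g', \]
which contradicts $g_i-g'\notin S$.

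I do not foresee a real obstacle: the whole argument hinges on the algebraic identity $g_j+(g_i-g)=g_i-g'$ combined with the absorption property characteristic of pseudo-Frobenius numbers, and the induction is well founded because subtracting a positive $g_j$ strictly decreases $g$. The single-summand instance of the lemma is precisely the inclusion \eqref{eq:differences} recorded just before the statement, so the result is a natural combinatorial strengthening of that observation.
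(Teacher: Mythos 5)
Your proof is correct and follows essentially the same route as the paper: peel off one pseudo-Frobenius summand $g_j$ from $g$, apply the absorption property $g_j+s\in S$ to the hypothetical element $g_i-g$, and contradict the inductive hypothesis for $g'=g-g_j$. The only cosmetic difference is that you induct on the value of $g$ (with base case $g=0$) rather than on the number of summands $k$ (with base case $k=1$ given by the displayed inclusion of differences); both inductions are well founded and the key step is identical.
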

\begin{proof}
Assume that $g=g_{i_1}+\dots+g_{i_k}$ for some $k\in \mathbb N$. We proceed by induction on $k$. 
The case $k=1$ is given by (\ref{eq:differences}).
Assume that the result holds for $k-1$ and let us prove it for $k$. If $g_i-g\in S$, then $g_{i_k}+(g_i-g)\in S$ by definition of pseudo-Frobenius number. It follows that $g_i-(g_{i_1}+\dots+g_{i_{k-1}})\in S$, contradicting the induction hypothesis.
\end{proof}
\begin{remark}\label{rem:type_multiplicity}
Lemma~\ref{lemma:type_multiplicity} implies that $\{x\in\mathbb{N}\mid 1\le x\le \toper(S)\}\subseteq \Goper(S)$. Hence $\{1,\ldots, n\}\subseteq \calGFoper(\PFsf)$.
\end{remark}
As the pseudo-Frobenius numbers of $S$ are gaps of $S$ and any positive divisor of a gap must be a gap also, we conclude that the set of divisors of
$$\PFoper(S)\cup\{x\in\mathbb{N}\mid 1\le x\le \mathrm{t}(S)\}\cup\{g_i-g\mid i\in \{2,\ldots,\mathrm{t}(S)\}, g\in\langle \PFoper(S)\rangle, g_i>g\}$$
consists entirely of gaps of $S$.

Consider the set
$$\PFsf\cup\{x\in\mathbb{N}\mid 1\le x\le n\}\cup\{g_i-g\mid i\in \{2,\ldots,n\}, g\in\langle \PFsf\rangle, g_i>g\}$$
and denote by $\mathrm{sfg}(\PFsf)$ the set of its divisors (as we are only considering positive divisors, in what follows we will not include this adjective).
If $S$ is a numerical semigroup such that $\PFoper(S)=\PFsf$, we deduce that $\mathrm{sfg}(\PFsf)\subseteq \Goper(S)$. 
We have proved the following result for the case where there is a numerical semigroup $S$ such that $\PFoper(S)=\PFsf$. If no such semigroup exists, then $\calGFoper(\PFsf)=\mathbb N$ and the result trivially holds.
\begin{corollary}\label{cor:starting_forced_gaps}
Let $\PFsf$ be a set of positive integers. Then $\mathrm{sfg}(\PFsf)\subseteq \calGFoper(\PFsf)$.
\end{corollary}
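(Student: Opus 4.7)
The plan is to split into two cases according to whether $\calSoper(\PFsf)$ is empty or not, and in the nonempty case to verify the containment semigroup by semigroup before intersecting.

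First I would dispose of the trivial case. If $\calSoper(\PFsf)=\emptyset$, then by the convention established just after the definition of $\calGFoper$ we have $\calGFoper(\PFsf)=\mathbb{N}$, and the inclusion $\mathrm{sfg}(\PFsf)\subseteq\calGFoper(\PFsf)$ holds with nothing to check.

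Next, assume $\calSoper(\PFsf)\neq\emptyset$ and fix an arbitrary $S\in\calSoper(\PFsf)$; I will show $\mathrm{sfg}(\PFsf)\subseteq\Goper(S)$. Recall that $\mathrm{sfg}(\PFsf)$ is defined as the set of (positive) divisors of
\[
\PFsf\;\cup\;\{x\in\mathbb N\mid 1\le x\le n\}\;\cup\;\{g_i-g\mid i\in\{2,\ldots,n\},\ g\in\langle\PFsf\rangle,\ g_i>g\}.
\]
The first piece lies in $\Goper(S)$ because $\PFsf=\PFoper(S)$ consists of gaps of $S$. The second piece lies in $\Goper(S)$ by Remark~\ref{rem:type_multiplicity} (a direct consequence of Lemma~\ref{lemma:type_multiplicity}, since $n=\toper(S)<\moper(S)$). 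The third piece lies in $\Goper(S)$ by Lemma~\ref{lemma:pseudo-comb-pseudo} applied to $S$, noting that $\langle\PFsf\rangle=\langle\PFoper(S)\rangle$. Finally, for any gap $\ell$ of $S$ and any positive divisor $d$ of $\ell$, $d$ is itself a gap of $S$: otherwise $d\in S$ would force $\ell=(\ell/d)\cdot d\in S$, a contradiction. Thus the set of divisors of the displayed union is still contained in $\Goper(S)$, which gives $\mathrm{sfg}(\PFsf)\subseteq\Goper(S)$.

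Since this holds for every $S\in\calSoper(\PFsf)$, intersecting over all such $S$ yields
\[
\mathrm{sfg}(\PFsf)\subseteq\bigcap_{S\in\calSoper(\PFsf)}\Goper(S)=\calGFoper(\PFsf),
\]
as required. There is essentially no obstacle here: all the heavy lifting is done by Lemma~\ref{lemma:pseudo-comb-pseudo} and Lemma~\ref{lemma:type_multiplicity}, and the only subtlety worth flagging explicitly is the closure of $\Goper(S)$ under taking (positive) divisors, which is an immediate consequence of $S$ being a submonoid of $(\mathbb N,+)$.
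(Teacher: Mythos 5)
Your proof is correct and follows essentially the same route as the paper: dispose of the case $\calSoper(\PFsf)=\emptyset$ via the convention $\calGFoper(\PFsf)=\mathbb{N}$, and otherwise show $\mathrm{sfg}(\PFsf)\subseteq\Goper(S)$ for each $S\in\calSoper(\PFsf)$ using Remark~\ref{rem:type_multiplicity}, Lemma~\ref{lemma:pseudo-comb-pseudo}, and closure of the gap set under taking positive divisors. The only difference is cosmetic: you spell out the divisor-closure argument, which the paper simply asserts.
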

We use the terminology \emph{starting forced gap for} \PFsf\ to designate any element of $\mathrm{sfg}(\PFsf)$, since $\mathrm{sfg}(\PFsf)$ is the set we start with when we are looking for forced gaps. In Subsection~\ref{subsec:forced_gaps_pseudo_code} we provide pseudo-code for a function to compute starting forced gaps.

\subsection{Initial necessary conditions}
\label{subsec:starting_conditions}
Let $n>1$ be an integer and let $\PFsf = \{g_1<\cdots <g_n\}$ be a set of positive integers.
\begin{lemma}\label{lemma:naive_condition}
Let $S$ be a numerical semigroup such that $\PFoper(S)=\PFsf$.
Let  $i\in\{2,\ldots,n\}$ and $g\in\langle \PFsf\rangle\setminus\{0\}$ with $g<g_i$. Then there exists $k\in \{1,\ldots,n\}$ such that $g_k-(g_i-g)\in S$. 
\end{lemma}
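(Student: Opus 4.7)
The plan is to combine the two facts already available: Lemma~\ref{lemma:pseudo-comb-pseudo} tells us that $g_i-g$ is itself a gap of $S$, and Lemma~\ref{lemma:maximals}(ii) then converts any such gap into the difference $f-x$ with $f$ a pseudo-Frobenius number. This gives the required $g_k$ essentially immediately.

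More concretely, first I would set $x := g_i - g$ and observe that $x$ is a positive integer (since the hypothesis $g<g_i$ is assumed). The hypotheses of Lemma~\ref{lemma:pseudo-comb-pseudo} are exactly the ones we have here ($i \ge 2$, $g \in \langle \PFsf\rangle$ with $g<g_i$, and $n>1$ is implicit once $i\ge 2$ exists), so that lemma yields $x = g_i - g \in \Goper(S)$, i.e., $x \in \mathbb{Z}\setminus S$.

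Next I would apply Lemma~\ref{lemma:maximals}(ii) to this $x$. Since $x\notin S$, there exists $f \in \PFoper(S)$ with $f-x \in S$. Because $\PFoper(S)=\PFsf=\{g_1,\ldots,g_n\}$, this $f$ must equal $g_k$ for some $k\in\{1,\ldots,n\}$, so $g_k-(g_i-g)\in S$, as required.

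There is essentially no obstacle: both ingredients have already been established, and the argument is just the composition of (i) ``$g_i-g$ is a gap'' with (ii) ``every gap has a pseudo-Frobenius number above it in the $\le_S$-order''. The only thing worth double-checking is that we are entitled to pass from $g\in\langle\PFsf\rangle\setminus\{0\}$ to a representation $g=g_{i_1}+\cdots+g_{i_k}$ with $k\ge 1$ required by Lemma~\ref{lemma:pseudo-comb-pseudo}; this is immediate from the definition of $\langle\PFsf\rangle$ and the exclusion of $0$.
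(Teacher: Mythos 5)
Your proof is correct and is essentially identical to the paper's: the paper likewise invokes Lemma~\ref{lemma:pseudo-comb-pseudo} to conclude $g_i-g\notin S$ and then applies Lemma~\ref{lemma:maximals}(ii) to produce the required $g_k$. Your extra checks (positivity of $g_i-g$, the representation of $g$ as a nonempty sum of elements of $\PFsf$) are fine and just make explicit what the paper leaves implicit.
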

\begin{proof}
Lemma~\ref{lemma:pseudo-comb-pseudo} assures that $g_i-g\not\in S$. The conclusion follows from Lemma~\ref{lemma:maximals}.
\end{proof}

By choosing $i=n$ in the above result, there exists $k\ne n$ such that $g_k-(g_n-g_1)\ge 0$ and $g_k-(g_n-g_1)\not\in\{1,\ldots,\typesf\}$. (Note that $k=n$ would imply $g_1\in S$, which is impossible.) But then $g_{n-1}-(g_n-g_1)\ge 0$, since $g_{n-1}\ge g_k$ for all $k\in \{1,\ldots,n-1\}$. We have thus proved the following corollary.
\begin{corollary}\label{cor:naive_condition_g1}
Let $S$ be a numerical semigroup such that $\PFoper(S)=\PFsf$. Then  $g_1\ge g_n-g_{n-1}$. 
\end{corollary}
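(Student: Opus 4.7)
The plan is to invoke Lemma~\ref{lemma:naive_condition} with the extremal choices $i = n$ and $g = g_1$. Since $g_1 \in \PFsf \subseteq \langle \PFsf \rangle \setminus \{0\}$ and $g_1 < g_n$, the hypotheses are satisfied, so the lemma produces some index $k \in \{1,\ldots,n\}$ with $g_k - (g_n - g_1) \in S$. In particular this difference is a nonnegative integer, so $g_k \ge g_n - g_1$.

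Next I would rule out $k = n$: if $k = n$, then $g_n - (g_n - g_1) = g_1$ would lie in $S$, contradicting the fact that $g_1$, being a pseudo-Frobenius number of $S$, is a gap. Hence $k \in \{1,\ldots,n-1\}$, and because the $g_i$ are written in increasing order, $g_k \le g_{n-1}$. Combining the two inequalities gives $g_{n-1} \ge g_k \ge g_n - g_1$, which rearranges to $g_1 \ge g_n - g_{n-1}$, as required.

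There is essentially no serious obstacle here; the statement is just a specialization of Lemma~\ref{lemma:naive_condition} together with the observation that the existing index $k$ cannot be $n$. The only point requiring a little care is the comment (made in the paragraph preceding the corollary) that $g_k - (g_n - g_1) \not\in \{1,\ldots,\typesf\}$, which follows from Remark~\ref{rem:type_multiplicity} and reinforces that the value truly lies in $S$; but for the proof of the stated inequality it suffices to know the difference is nonnegative, so this extra refinement is not strictly needed.
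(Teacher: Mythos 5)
Your proof is correct and follows exactly the paper's own argument: specialize Lemma~\ref{lemma:naive_condition} to $i=n$, $g=g_1$, rule out $k=n$ because it would force $g_1\in S$, and then use $g_k\le g_{n-1}$ to obtain the inequality. Your closing observation that the refinement $g_k-(g_n-g_1)\not\in\{1,\ldots,\typesf\}$ is not needed for the stated inequality is also accurate.
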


The computational cost of testing the condition $g_1\ge g_n-g_{n-1}$ obtained in Corollary~\ref{cor:naive_condition_g1} is negligible and should be made before calling any procedure to compute $\calSoper(\PFsf)$, avoiding in many cases an extra effort that would lead to the empty set. 

Other conditions of low computational cost would also be useful. Since $g_{n-1}-(g_n-g_1)\ge 0$, one could be tempted to ask whether replacing $g_1$ by $g_2$ one must have $g_{n-1}-(g_n-g_2)\not\in\{1,\ldots,\typesf\}$ (since $\{1,\ldots,\typesf\}$ consists of gaps). The following example serves to rule out this one that seems to be a natural attempt.
\begin{example} Let $S=\langle 8, 9, 10, 11, 13, 14, 15\rangle$. One can check easily that $\PFoper(S)=\{5, 6, 7, 12\}$. For $\PFsf = \{5, 6, 7, 12\}$, we have $g_{n-1}-(g_n-g_2)=7-(12-6)=1\in \{1,\ldots,4\}=\{1,\ldots,\typesf\}$.
\end{example}
Later, in Subsection~\ref{subsec:condition_based_on_forced_integers}, we give an extra condition, which is based on forced integers. 

\section{Connections with \textsf{GAP} and the package \textsf{numericalsgps}}
\label{sec:connections_numericalsgps}
When developing the present algorithms we had one implementation in mind. As already referred, it was to be made in the \textsf{GAP} programming language, benefiting also of the package \textsf{numericalsgps}. In fact the implementation accompanied the development of the algorithms, with mutual benefits.

To understand the paper no previous familiarity with \textsf{GAP} is assumed.  We give some examples to get the familiarity needed to fully understand the pseudo-code. 
Many of the terms used for the pseudo-code presented are borrowed from the \textsf{GAP} programming language, which is a high level one. 
The terms
\emph{Union, Length, Difference}
are used with the meaning they have in \textsf{GAP}, which is clear. 
\smallskip

Like what is done in \textsf{GAP}, for an unary function that applied to an argument $arg$ returns an expression $expr$, we write 
$$arg \to expr.$$
This shorthand for writing a function is used in our pseudo-code. Next we briefly explain the meaning of some of the terms used. For a not so brief explanation, complete definitions and plenty of examples, \textsf{GAP}'s manual should be consulted.

 Here $list$ is a list (of integers or of numerical semigroups) and $func$ is an unary function (that applies to the objects in $list$).
\begin{itemize} \itemsep2pt
\item $\List( list, func)$ returns a new list $new$ of the same length as $list$ such that $new[i] = func(list[i])$.
\item $\Filtered( list, func )$
returns a new list that contains those elements of $list$ for which $func$ returns \emph{true}. The order of the elements in the result is the same as the order of the corresponding elements of this list.
\item One can use $\Set(list)$ (which is a synonym of $\mathrm{SSortedList}$ (``strictly sorted list'')) to get a list that is duplicate free and sorted (for some order).
\item $\AddSet(list,el)$ adds the element $el$ to the set $list$.
\item $\First( list, func )$ returns the first element of $list$ for which the unary function $func$ returns \emph{true}. If func returns false for all elements of list, then $\mathrm{First}$ returns \emph{fail}.
\item $\ForAll( list, func )$
tests whether $func$ returns \emph{true} for all elements in  $list$.
\item $\ForAny( list, func )$ tests whether $func$ returns \emph{true} for at least one element in $list$.
\item $\IsRange(list)$ detects if the argument is an interval of integers.
\end{itemize}

We also use the following abbreviations. Let $X$ be a list of integers and $P$ a list of positive integers.
\begin{itemize} \itemsep2pt
\item \texttt{PosInt(X)}, an abbreviation of \texttt{Filtered(X,IsPosInt)}, returns the positive integers of $X$.
\item \texttt{Divisors(P)}, an abbreviation of \texttt{Union(List(P,DivisorsInt))}, returns the divisors of the elements of $P$: it first computes the lists of divisors for each element in \texttt{P}, and then takes the union of all of them.
\end{itemize}

The \textsf{numericalsgps} package also influences our pseudo-code. In some cases we use directly the names of the available functions, but in some cases we use shorthands, which are intended to turn the pseudo-code more readable.

One of the functions we use is \RepresentsGapsOfNumericalSemigroup, which, for a given set $X$ of positive integers returns \emph{true} in case there exists a numerical semigroup $S$ such that $\Goper(S)=X$, and returns \emph{false} otherwise. 

The functions to produce numerical semigroups used here are \NumericalSemigroup, which is used when generators are given, and \NumericalSemigroupByGaps, which, for an input $X$, returns a numerical semigroup whose set of gaps is $X$, when such a semigroup exists.    

As a shorthand for $\mathrm{SmallElements}(S)$, which gives the elements of the numerical semigroup $S$ that are nor greater than $\mathrm{F}(S)+1$, we use simply  \SmallElements, which agrees with the notation already introduced.
\texttt{Closure(elts,frob)} is a shorthand for
\begin{center}
\texttt{NumericalSemigroup(Union(elts,[frob+1..frob+Minimum(elts)]))}
\end{center}
which gives the least numerical semigroup containing the set $\mathrm{elts}$ of positive integers and having the largest possible Frobenius number not greater than $\mathrm{frob}$.
Note that the minimum of $\mathrm{elts}$ is greater than or equal to the multiplicity of the semigroup. Therefore, the union considered ensures that the semigroup contains all integers that are greater than the number $\mathrm{frob}$ given (aiming to be an upper bound for the Frobenius number of the semigroup constructed, although in cases where the elements $\mathrm{elts}$ define a numerical semigroup, it may have smaller Frobenius number).

As in \textsf{GAP}, comments start with the character $\#$.

As an example on how we will present function/algorithms in this manuscript, we write the function \closure.
\begin{function}[ht]\caption{Closure()\label{func:Closure}}
\Closurew{\elts,\frob}\Comment*[f]{the least numerical semigroup containing \elts and} \\
\Comment*[f]{whose Frobenius number is not greater than \frob}\\
\Return \NumericalSemigroup(\Union(\elts,[\frob+1..\frob+\Minimum(\PositiveInt(\elts))]));
\end{function}

The following example is just a \textsf{GAP} session that is intended to illustrate how to compute the set $\calSoper(\PFsf)$ by filtering those semigroups that have \PFsf\ as set of pseudo-Frobenius numbers, from the numerical semigroups whose Frobenius number is the maximum of \PFsf. This process was mentioned in the introduction.

Throughout the examples, we use a simple way provided by \textsf{GAP} to give a rough idea of the time spent: \texttt{time} is a global variable that stores the time in milliseconds the last command took.
\begin{example}\label{example:first}
We illustrate how to compute the set of numerical semigroups having $\{19,29\}$ as set of pseudo-Frobenius numbers. Double semicolon in \textsf{GAP} inhibits the output.
\begin{verbatim}
gap> pf := [19,29];;
gap> nsf29 := NumericalSemigroupsWithFrobeniusNumber(29);;time;
31372
gap> Length(nsf29);
34903
gap> nspf1929 := Filtered(nsf29, s -> PseudoFrobeniusOfNumericalSemigroup(s) = pf);;
gap> time;
2540
gap> Set(nspf1929,MinimalGeneratingSystem);                         
[ [ 3, 22, 32 ], [ 6, 9, 16, 26 ], [ 7, 9, 17 ], [ 8, 9, 14 ], [ 8, 9, 15, 22, 28 ], 
[ 9, 12, 13, 14 ], [ 9, 12, 13, 15, 23 ], [ 9, 12, 14, 16, 22 ], 
[ 9, 12, 15, 16, 22, 23, 26 ], [ 9, 13, 14, 17, 21, 24, 25 ], 
[ 9, 13, 15, 17, 21, 23, 25 ], [ 9, 14, 16, 17, 21, 22, 24 ], 
[ 9, 15, 16, 17, 21, 22, 23, 28 ] ]
\end{verbatim}
\end{example}
Prior to the obtention of the procedures that are the object of study of the present paper, we got the necessary insight through detailed analysis of many examples with \PFsf\ consisting of small numbers (less than $29$, say).  Example~\ref{example:first} 
illustrates that this can be easily done by using the \textsf{numericalsgps} package. When \PFsf\ consists of small numbers the time spent is acceptable.

The implementation of our algorithm (available in Version~0.99 of the \textsf{numericalsgps} package) is much faster. It is even faster than just filtering among the numerical semigroups with a given set of pseudo-Frobenius numbers. 
\begin{example}\label{example:second}
To illustrate, we continue the \textsf{GAP} session started in Example~\ref{example:first}.
\begin{verbatim}
gap> new := NumericalSemigroupsWithPseudoFrobeniusNumbers(pf);;time;
29
gap> Set(new,MinimalGeneratingSystem)=Set(nspf1929,MinimalGeneratingSystem);
true
\end{verbatim}
\end{example}

Visualization of images obtained by using the \textsf{intpic}~\cite{intpic} \textsf{GAP} package have also helped us to improve our understanding of the problem and to get the necessary intuition.

The production of Figure~\ref{fig:pfs_19_29} takes less than two minutes in our laptop (using  \textsf{numericalsgps} and \textsf{intpic} packages). Taking into account that there are $34903$ numerical semigroups with Frobenius number $29$, one can consider the performance of the implementations satisfactory.  
In order to explain the meaning of the colors used in this picture to highlight some integers, we anticipate the results shown in Example~\ref{example:forced_picture}. For $\PFsf=\{19,29\}$, we have that $\{1,2,4,5,10,11,19,20\}$ consists of forced gaps and that $\{0, 9, 18, 24, 25, 27, 28, 30\}$ consists of forced elements. To the elements in each of these sets, as well as the ones in the sets of minimal generators, is assigned one color (\emph{red} corresponds to pseudo-Frobenius numbers, \emph{blue} to minimal generators, \emph{green} to elements, \emph{cyan} to forced gaps, and \emph{magenta} to forced elements; in a black and white visualization of this manuscript this will correspond with different gray tonalities). For integers that belong to more that one set, gradient colors are assigned.

\begin{figure}
\begin{center}
\includegraphics[scale=0.58]{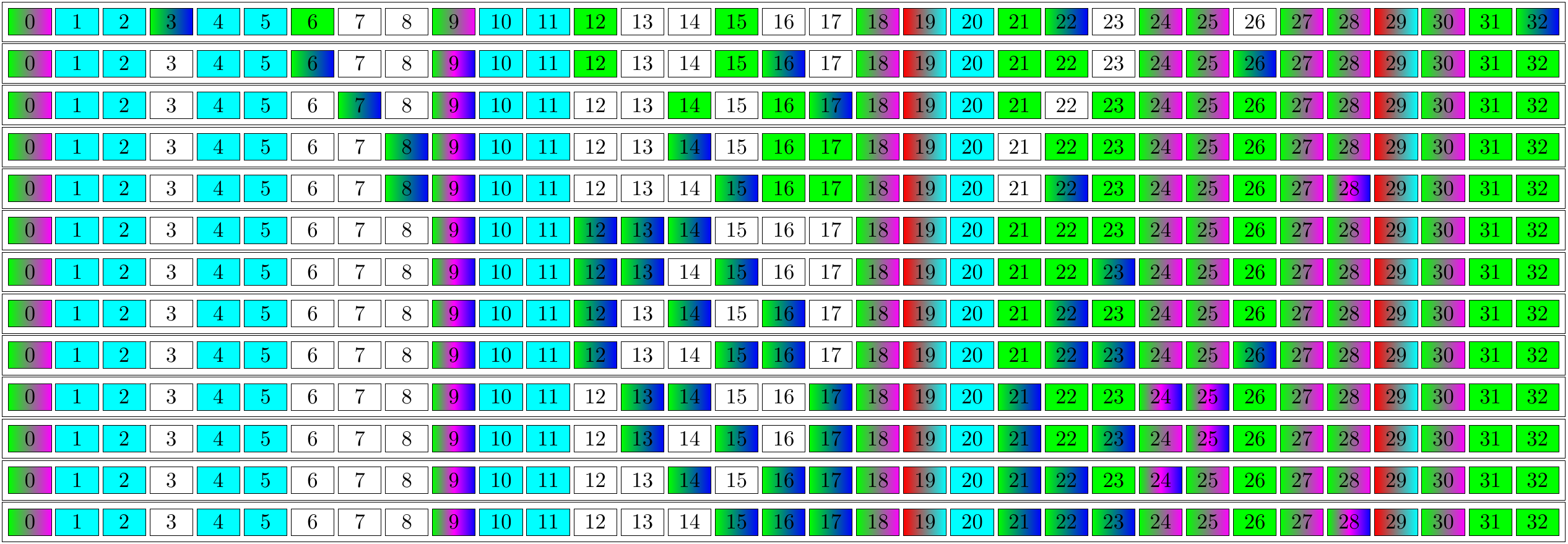}
\end{center}
\caption{The numerical semigroups with pseudo-Frobenius numbers $\{19, 29\}$.\label{fig:pfs_19_29}}
\end{figure}

\section{Integers forced by pseudo-Frobenius numbers - some preliminary procedures}
\label{sec:preliminary_forced_ints}
In this section we give pseudo-code for several functions implementing results given in Section~\ref{sec:generalities} and some others needed later in Section~\ref{sec:computing_forced_ints}.

Once more, $\PFsf$ is a fixed set $\{g_1<g_2<\cdots<g_{n-1}<g_n\}$ of positive integers; \frobsf\ stands for $g_n$ and \typesf\ stands for~$n$.

\subsection{Forced gaps}
\label{subsec:forced_gaps_pseudo_code}
The function~\ref{func:StartingForcedGaps} returns the integers considered in Subsection~\ref{subsec:starting_forced_gaps}, which we called starting forced gaps. Recall (Corollary~\ref{cor:starting_forced_gaps}) that these have to be gaps of all the numerical semigroups having \PFsf\ as set of pseudo-Frobenius numbers.
The justifications for Lines~\ref{line:type_multiplicity} and~\ref{line:pseudo-comb-pseudo} are given by Remark~\ref{rem:type_multiplicity} and Lemma~\ref{lemma:pseudo-comb-pseudo}, respectively.
Line~\ref{line:exclusion} is justified by Lemma~\ref{lemma:maximals}(ii): when it is detected a gap that had to be an element (forced by exclusion), there exists a contradiction and the function returns \fail.
This proves the following proposition.
\begin{proposition}\label{prop:correction_StartingForcedGaps}
Let \textsf{OUT} be the output of the function \ref{func:StartingForcedGaps} for the input \PFsf. If \textsf{OUT} is not \fail, then it consists of gaps of any numerical semigroup $S\in \calSoper(\PFsf)$.
\end{proposition}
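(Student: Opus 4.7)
The plan is to trace through the function \ref{func:StartingForcedGaps} line by line and verify that each integer placed into the output set must be a gap of every $S\in \calSoper(\PFsf)$, so that the output, when it is not \fail, is contained in $\mathrm{sfg}(\PFsf)\subseteq\calGFoper(\PFsf)$. The case $\calSoper(\PFsf)=\emptyset$ is vacuous for the statement, so I would fix an arbitrary $S\in\calSoper(\PFsf)$ and check that every integer the function accumulates belongs to $\Goper(S)$.

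First, the divisors of the elements of $\PFsf$ themselves are gaps of $S$: each $g_i\in\PFsf=\PFoper(S)$ is a gap, and any positive divisor of a gap is a gap. Next, for the contribution corresponding to Line~\ref{line:type_multiplicity}, I would invoke Remark~\ref{rem:type_multiplicity} (itself a consequence of Lemma~\ref{lemma:type_multiplicity}) to conclude $\{1,\dots,n\}\subseteq\Goper(S)$. For the contribution corresponding to Line~\ref{line:pseudo-comb-pseudo}, I would apply Lemma~\ref{lemma:pseudo-comb-pseudo} directly to see that every integer of the form $g_i-g$ with $i\in\{2,\dots,n\}$ and $g\in\langle\PFsf\rangle$, $g<g_i$, lies in $\Goper(S)$. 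Finally, taking divisors of the whole accumulated set still yields only gaps, again because divisors of gaps are gaps. Collectively this is precisely the recipe defining $\mathrm{sfg}(\PFsf)$, so Corollary~\ref{cor:starting_forced_gaps} (re-derived along the way) gives $\mathrm{sfg}(\PFsf)\subseteq\calGFoper(\PFsf)\subseteq\Goper(S)$.

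The remaining point is the exclusion test at Line~\ref{line:exclusion}: this is the only place where the function can abort with \fail. The test uses Lemma~\ref{lemma:maximals}(ii): if some candidate forced gap $\ell$ were shown to be forced to lie in $S$ as well (for instance, because $g_i-\ell$ could not be written as $f-x\in S$ for any $f\in\PFsf$, contradicting $\ell\notin S$ via the lemma), the function returns \fail. I would observe that whether this test triggers or not does not affect the status as gaps of the integers that \emph{have already been} placed in the output set: their membership in $\Goper(S)$ is guaranteed by the preceding steps regardless of the exclusion check. Hence, if the function does not return \fail, the returned set is a subset of $\Goper(S)$ for every $S\in\calSoper(\PFsf)$.

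The only subtle point, and the one I would be most careful about, is making sure the exclusion step is correctly interpreted: namely that a \fail\ output signals an inconsistency, not a retraction of previously added elements. Once that is settled, the proposition follows immediately from Corollary~\ref{cor:starting_forced_gaps} together with the individual justifications above; no new numerical-semigroup arguments beyond those already proved in Section~\ref{sec:generalities} are required.
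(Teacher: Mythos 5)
Your proposal is correct and follows essentially the same route as the paper: the output is identified with $\mathrm{sfg}(\PFsf)$, with Line~\ref{line:type_multiplicity} justified by Remark~\ref{rem:type_multiplicity}, Line~\ref{line:pseudo-comb-pseudo} by Lemma~\ref{lemma:pseudo-comb-pseudo}, the divisor closure by the fact that divisors of gaps are gaps, and Line~\ref{line:exclusion} by Lemma~\ref{lemma:maximals}(ii), so that Corollary~\ref{cor:starting_forced_gaps} yields the conclusion. Your additional remark that a \fail{} at the exclusion test signals an inconsistency rather than retracting previously justified gaps is a sound (if implicit in the paper) observation.
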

\begin{function}[ht]\caption{StartingForcedGaps()\label{func:StartingForcedGaps}}
\StartingForcedGapsw{\PF}\\
\nl\label{line:type_multiplicity}
  $\mathit{fg} := \Union([1..\type],\PF)$; \Comment*[f]{uses the fact $\mathrm{m}(S)\ge\mathrm{t}(S)+1$ (Remark~\ref{rem:type_multiplicity})}\\
  $closures := \List([1..\type-1], i\to \SmallElements(\closure(\{\PF[1],\PF[2],\ldots,\PF[i]\},\PF[i+1])))$\;
  \emph{diffs} := $\emptyset$\;
\nl\label{line:pseudo-comb-pseudo}
  \For{$i \in [2..\type]$}{
    \emph{diffs} $:= \Union(\mathit{diffs},\PositiveInt(\PF[i] - closures[i-1]))$;\Comment*[f]{see Lemma~\ref{lemma:pseudo-comb-pseudo}}}
$\mathit{fg} :=\Divisors(\Union(\mathit{fg},\mathit{diffs}))$\;

\Comment{Detect possible contradiction given by Lemma~\ref{lemma:maximals}(ii)}
\nl\label{line:exclusion}
\For{$x \in \Difference(fg,\PF))$}{\If{$\PositiveInt(\Difference(\PF-x, fg)) = \emptyset$}{\Return \fail}
}
\Return $\mathit{fg}$;
\end{function}
\begin{example}\label{example:starting_forced_gaps}
Let $\PFsf=\{16,29\}$. One can easily check that the set of starting forced gaps is $\{1,2,4,8,13,16,29\}$. The corresponding \textsf{GAP} session:
\begin{verbatim}
gap> pf := [16,29];;
gap> g := StartingForcedGapsForPseudoFrobenius(pf);      
[ 1, 2, 4, 8, 13, 16, 29 ]
\end{verbatim}
\end{example}
The function \ref{func:FurtherForcedGaps} is used to determine forced gaps when some gaps and some elements of a numerical semigroup are known.
The justification for the fact that the output of \ref{func:FurtherForcedGaps} consists of gaps (unless there is an element that also had to be a gap, in which case it returns \emph{fail}) is the following: if $f-e$ and $e$ are elements of a semigroup then $e+f-e=f$ belongs to the semigroup. In particular, if $e$ is an element and $f$ is a gap, then $f-e$ is either negative or a gap. 
This proves the following proposition.
\begin{proposition}\label{prop:correction_FurtherForcedGaps}
Let \textsf{OUT} be the output of the function \ref{func:FurtherForcedGaps} for the input  $(\fg,\fe)$, with \fg and \fe consisting of \PFsf-forced gaps and \PFsf-forced elements, respectively. If \textsf{OUT} is \fail, then $\calSoper(\PFsf)=\emptyset$. Otherwise, \textsf{OUT} consists of gaps of any numerical semigroup $S\in \calSoper(\PFsf)$.
\end{proposition}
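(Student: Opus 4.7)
The plan is to argue in two parts, mirroring the dichotomy in the statement: first that every integer produced by the function is genuinely a forced gap, and second that a \fail{} output witnesses the non-existence of any semigroup with the prescribed pseudo-Frobenius set. The proof essentially amounts to iterating the elementary observation given right before the statement: if $e\in S$ and $f\notin S$, then $f-e$, whenever positive, also lies in $\mathbb N\setminus S$, because $(f-e)+e=f$ would otherwise force $f$ into $S$.

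For the first part, I would let $S\in\calSoper(\PFsf)$ be arbitrary and proceed by induction on the order in which the function \ref{func:FurtherForcedGaps} adds elements to its running set of gaps. At the start, the set consists of the input \fg, whose members are forced gaps by hypothesis, hence gaps of $S$. Each new candidate gap is produced either as a difference $f-e$ where $f$ is already a known (forced) gap of $S$ and $e\in\fe\subseteq\SEoper(S)$, or as a divisor of a member of the current forced-gap set. In the first case, $f-e$ cannot belong to $S$ by the observation above, so it is a gap of $S$. In the second case, any positive divisor of a gap of $S$ is itself a gap of $S$, since $S$ is closed under sums and hence under integer multiplication of its elements. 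This shows every integer in the final \textsf{OUT} lies in $\Goper(S)$.

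For the second part, suppose the function returns \fail. This happens precisely when, during the computation, some integer $x$ appears simultaneously as a (putative) forced gap and as a member of \fe. Assuming $\calSoper(\PFsf)\neq\emptyset$ and picking any $S\in\calSoper(\PFsf)$, the argument of the first part shows that $x\in\Goper(S)$; on the other hand $x\in\fe\subseteq\SEoper(S)\subseteq S$, contradicting $\Goper(S)\cap S=\emptyset$. Hence $\calSoper(\PFsf)=\emptyset$, which is exactly Proposition~\ref{prop:intersection_gaps_elements} applied to the pair $(\textsf{OUT},\fe)$.

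The only delicate point, and the one I would be most careful about, is the inductive bookkeeping: one must verify that the function only ever forms differences $f-e$ where $f$ has already been certified as a forced gap at an earlier step (rather than, say, forming $g_1-g_2$ for two independent candidates). Granting this scheduling property of the pseudo-code, the argument reduces to the single closure principle above plus the ``divisors of gaps are gaps'' principle, and the \fail{} case is then a direct application of Proposition~\ref{prop:intersection_gaps_elements}. No use of Lemma~\ref{lemma:maximals} is needed here, in contrast with the correctness proof for \ref{func:StartingForcedGaps}.
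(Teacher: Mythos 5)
Your proof is correct and follows essentially the same route as the paper, whose entire justification is the single observation that if $e$ is an element and $f$ a gap then $f-e$ is either negative or a gap (plus the fact that divisors of gaps are gaps, and the contradiction with Proposition~\ref{prop:intersection_gaps_elements} in the \fail{} case). The ``scheduling'' worry you flag does not actually arise: the function makes a single pass, forming only differences $g-e$ with $g$ in the input \fg{} and $e$ in the input \fe{} (the iteration lives in Algorithm~\ref{alg:SimpleForcedIntegers}, not here), so no induction on the order of insertion is needed.
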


\begin{function}[ht]\caption{FurtherForcedGaps()\label{func:FurtherForcedGaps}}
\FurtherForcedGapsw{\fg,\fe}\Comment*[f]{\fg and \fe consist of gaps and elements,}\\
 \Comment*[f]{respectively}\\
  $ng := \Divisors(\PositiveInt(\Union(\List(\fg, g \to g - \fe))))$\;
  \nl\label{line:conflicts_gaps}
  \eIf{$\Intersection(\fe,ng) = \emptyset$}{\Return $\Union(\fg,ng)$\;}{\Return fail\;}
\end{function}  

\subsection{Forced elements}
\label{subsec:forced_elements_pseudo_code}

We use two ways to get new forced elements. One of these ways makes use of Lemma~\ref{lemma:maximals}(ii). We refer to the elements obtained in this way as \emph{elements forced by exclusion}. Another way makes use of the following lemma, which tells us that small gaps force elements that are close to the maximum of \PFsf. Sometimes we refer to them by using the more suggestive terminology \emph{big forced elements}.
\begin{lemma}\label{lemma:big_elts}
Let $m$ be the multiplicity of a numerical semigroup $S$ and let $i$ be an integer such that $1\le i <m $. Then either $\mathrm{F}(S)-i\in S$ or $\mathrm{F}(S)-i\in\PFoper(S)$.
\end{lemma}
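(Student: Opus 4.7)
The plan is to use the characterization of pseudo-Frobenius numbers given in Lemma~\ref{lemma:maximals}(i), which says that $\PFoper(S)$ consists precisely of the maximal elements of $\mathbb{Z}\setminus S$ under the order $\le_S$. Recall that $y$ is $\le_S$-above $x$ exactly when $y-x \in S\setminus\{0\}$. So $x \in \mathbb{Z}\setminus S$ is pseudo-Frobenius if and only if $x + s \in S$ for every $s \in S\setminus\{0\}$.

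First I would dispose of the easy case: if $\mathrm{F}(S)-i \in S$, there is nothing to prove. So assume $\mathrm{F}(S)-i \notin S$; the task is to show $\mathrm{F}(S)-i \in \PFoper(S)$, i.e.\ that $(\mathrm{F}(S)-i) + s \in S$ for every nonzero $s \in S$.

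The key step, and the whole content of the lemma, is the simple numerical inequality exploiting the hypothesis $i < m$. Since $s$ is a nonzero element of $S$, we have $s \ge m(S) = m > i$, whence $s - i > 0$ and
\[
(\mathrm{F}(S) - i) + s \;=\; \mathrm{F}(S) + (s - i) \;>\; \mathrm{F}(S).
\]
Every integer strictly greater than $\mathrm{F}(S)$ belongs to $S$ by definition of the Frobenius number, so $(\mathrm{F}(S)-i) + s \in S$. This holds for all $s \in S\setminus\{0\}$, which by Lemma~\ref{lemma:maximals}(i) forces $\mathrm{F}(S)-i$ to be a pseudo-Frobenius number.

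There is no real obstacle here; the argument is a two-line calculation once one unwinds the definitions. The only thing worth emphasising is the role of the bound $i < m$: it is exactly what ensures that adding any nonzero semigroup element to $\mathrm{F}(S)-i$ lands strictly past $\mathrm{F}(S)$, which in turn bypasses the need to examine $S$ below the Frobenius number at all.
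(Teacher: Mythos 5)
Your proof is correct and follows essentially the same route as the paper: both arguments rest on the observation that $i<m$ forces $(\mathrm{F}(S)-i)+s>\mathrm{F}(S)$ for every nonzero $s\in S$ (the paper states this only for $s=m$ and leaves the general case implicit), so that $\mathrm{F}(S)-i$ is either in $S$ or maximal in $\mathbb{Z}\setminus S$ with respect to $\le_S$. If anything, your write-up is slightly more explicit than the paper's in spelling out why the conclusion holds for all $s\in S\setminus\{0\}$, not just for the multiplicity.
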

\begin{proof}
It suffices to observe that, as $i <m$, one has that  $\mathrm{F}(S)-i +m>\mathrm F(S)$, and consequently $\mathrm{F}(S)-i +m\in S$. The result follows immediately from the definition of pseudo-Frobenius numbers.
\end{proof}

\begin{function}[ht]\caption{FurtherForcedElements()\label{func:FurtherForcedElements}}
\FurtherForcedElementsw{\fg,\fe}\Comment*[f]{\fg and \fe consist of gaps and elements, respectively}\\

\Comment{Big forced elements}
  $m := \First(\Integers,n \to n > 0 \And n \not\in \fg)$;\Comment*[f]{least integer that is not a forced gap}\\
\nl\label{line:big_elts}
  $be := \Difference(\frob - \{1,\ldots,m - 1\}, \PF)$\;

\Comment{Elements forced by exclusion}
 $ee := \emptyset$\;
\nl\label{line:excl_elts_if}
\For{$x\in \fg$}{
  $filt := \Filtered(\PF, f \to \Not((f-x \in \fg) \Or (f-x < 0)))$\;
    \If{$\Length(filt) = 1\ and\ filt[1]-x \not\in ee$}{
       $\AddSet(ee, filt[1]-x)$\;}}
$candidates := \Difference([1 .. \frob-1], \Union(\fg,\Union(\fe,ee)))$\;
\nl\label{line:excl_elts_only_if}
\For{$x \in candidates$}{
    \If{$\PositiveInt(\Difference(\PF-x, \fg)) = \emptyset$}{$\AddSet(ee, x)$\;}}
  $ne := \Union(\fe,ee,be)$\;
  \nl\label{line:conflicts_elts}
  \eIf{$\Intersection(\fg,ne)= \emptyset$}{\Return $ne$\;}{\Return \fail\;}
\end{function}
We observe that in the function \FurtherForcedElements it is used that \PFsf\ is precisely the set of pseudo-Frobenius numbers; otherwise there is no guarantee that the output consists of forced elements.

Let us now prove the correctness of this function. Justification for the the result produced by the cycle starting in Line~\ref{line:excl_elts_if} is given by the direct implication of Lemma~\ref{lemma:maximals}(ii). For the cycle starting in Line~\ref{line:excl_elts_only_if} is given by the reverse implication of the same lemma.
 (The integers known to be forced gaps are assumed to be gaps.)

Justification for Line~\ref{line:big_elts} follows from Lemma~\ref{lemma:big_elts}, since the $m$ appearing there is smaller than or equal to the multiplicity. 

We have then the following proposition.
\begin{proposition}\label{prop:correction_FurtherForcedElements}
Let \textsf{OUT} be the output of the function \ref{func:FurtherForcedElements} for the input  $(\fg,\fe)$, with \fg and \fe consisting of \PFsf-forced gaps and \PFsf-forced elements, respectively. If \textsf{OUT} is ``fail'', then $\calSoper(\PFsf)=\emptyset$. Otherwise, \textsf{OUT} consists of elements of any numerical semigroup $S\in \calSoper(\PFsf)$.
\end{proposition}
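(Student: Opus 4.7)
The plan is to fix an arbitrary $S\in \calSoper(\PFsf)$ (when this set is nonempty) and verify separately that each of the three ways the function collects new integers produces elements of $S$; then handle the two possible contradictions (the ``fail'' branch at Line~\ref{line:conflicts_elts}). Throughout, we use that $\fg\subseteq\Goper(S)$ and $\fe\subseteq\SEoper(S)$ by hypothesis and that $\PFoper(S)=\PFsf$, so Lemma~\ref{lemma:maximals}(ii) is available in both directions.

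For the \emph{big forced elements} (Line~\ref{line:big_elts}), first observe that the local variable $m$ is chosen so that $\{1,\dots,m-1\}\subseteq \fg\subseteq\Goper(S)$; in particular the multiplicity $\moper(S)$ is at least $m$. Hence for every $i\in\{1,\dots,m-1\}$, Lemma~\ref{lemma:big_elts} applies and gives $\frob-i\in S\cup\PFoper(S)=S\cup\PFsf$. Since $be$ removes the elements of \PFsf, every integer in $be$ lies in $S$. This step is conceptually the most delicate, and is the main ``obstacle'' — the correctness of the whole function rests on the soundness of this substitute for the multiplicity.

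For the first exclusion loop (Line~\ref{line:excl_elts_if}), pick $x\in\fg$, so $x\notin S$. By Lemma~\ref{lemma:maximals}(ii) some $f\in\PFsf$ satisfies $f-x\in S$; this $f$ automatically satisfies $f-x\ge 0$ and $f-x\notin\fg$ (because $\fg\cap S=\emptyset$), so $f\in filt$. Whenever $|filt|=1$, the unique candidate $filt[1]-x$ is forced to be $f-x\in S$. For the second exclusion loop (Line~\ref{line:excl_elts_only_if}), the hypothesis of the body says that for \emph{every} $f\in\PFsf$ either $f-x<0$ or $f-x\in\fg\subseteq\Goper(S)$, so no $f\in\PFsf$ has $f-x\in S$. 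The contrapositive of Lemma~\ref{lemma:maximals}(ii) then forces $x\in S$.

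Combining the three parts, every integer in $ne=\fe\cup ee\cup be$ belongs to $S$, so $ne\subseteq\SEoper(S)$ once one notes each such integer is at most \frob. If the test at Line~\ref{line:conflicts_elts} fails, i.e.\ $\fg\cap ne\ne\emptyset$, then any integer in this intersection would have to be simultaneously a gap (being in $\fg\subseteq\Goper(S)$) and an element (being in $ne\subseteq\SEoper(S)$) of every $S\in\calSoper(\PFsf)$; by Proposition~\ref{prop:intersection_gaps_elements} this forces $\calSoper(\PFsf)=\emptyset$, justifying the \fail return. Otherwise, $ne$ is returned and the preceding three claims give precisely the required property.
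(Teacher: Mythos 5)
Your proof is correct and follows essentially the same route as the paper: Lemma~\ref{lemma:big_elts} for the big forced elements (via the observation that the local $m$ is a lower bound for the multiplicity), the two directions of Lemma~\ref{lemma:maximals}(ii) for the two exclusion loops, and Proposition~\ref{prop:intersection_gaps_elements} for the \fail\ branch. You simply spell out the details more fully than the paper's terse justification does.
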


\subsection{A condition based on forced integers}
\label{subsec:condition_based_on_forced_integers}
When searching for forced integers, one should pay attention to the existence of possible contradictions.
\begin{example}
Let $PF=\{4,9\}$. Taking divisors and the difference $9-4$, one immediately sees that the set of starting forced gaps contains $\{1, 2, 3, 4, 5, 9\}$. But then $5$ appears as a forced gap and as a (big) forced element. This is a contradiction which shows that $\{4,9\}$ cannot be the set of pseudo-Frobenius numbers of a numerical semigroup. 
\end{example}
 According to Proposition~\ref{prop:intersection_gaps_elements}, the only that we need to take into account is that the set of forced gaps is disjoint from the set of forced elements.

At the end of the functions (Line~\ref{line:conflicts_gaps} in function \ref{func:FurtherForcedGaps}, and Line~\ref{line:conflicts_elts} in function \ref{func:FurtherForcedElements}), possible contradictions are detected.

\section{A procedure to compute integers forced by pseudo-Frobenius numbers}
\label{sec:computing_forced_ints}
Let $\PFsf= \{g_1,g_2,\ldots,g_{n-1},g_n\}$ be a set of positive integers. The aim of this section is to give a procedure to compute some elements of $\calGFoper(\PFsf)$ and $\calEFoper(\PFsf)$.

We start with a subsection that contains a procedure that makes use of the functions given in Section~\ref{sec:preliminary_forced_ints}, which implement some well-known facts recalled in Section~\ref{sec:generalities}.


We then devote a subsection to what we call admissible integers. This will yield another procedure (in Subsection~\ref{subsec:procedure_forced_integers}), which, at the cost of increasing the execution time, may find more forced integers.

The main algorithm of the present paper (Algorithm~\ref{alg:NumericalSemigroupsWithPseudoFrobeniusNumbers}) would work as well by using only the quick version. Our experiments led us to consider the option of using the slower version once, and then use the quick version inside a recursive function that is called by the main algorithm.

\subsection{A quick procedure to compute forced integers}
\label{subsec:quick_forced_integers}

\begin{algorithm}[ht]\caption{SimpleForcedIntegers\label{alg:SimpleForcedIntegers}}
\Input{$\fgs,\fes$, where $\fgs,\fes$ are sets of \PFsf-forced gaps and \PFsf-forced elements, respectively.}
\Output{[\fg, \fe], where
$\fg\supseteq \fgs$ is a set of \PFsf-forced gaps and 
$\fe\supseteq \fes$ is a set of \PFsf-forced elements,
or \fail, when some inconsistency is discovered}
\BlankLine
$\fg := \ShallowCopy(\fgs)$; \Comment*[f]{used to store new forced elements and gaps}\\
$\fe := \ShallowCopy(\fes)$; \Comment*[f]{without creating conflicts in the memory}\\
\Repeat{\Not changes}{
   $changes := \false$\;
\nl\label{line:simple_further_gaps}
   $gaps := \furtherForcedGaps(\fg,\fe)$\;
   \eIf{gaps = \fail}{\Return \fail}{\If{$gaps\ne\fg$}{
     $changes := \true$\;
     $\fg := gaps$;}}
\nl\label{line:simple_further_elts}
   $elts := \furtherForcedElements(\fg,\fe)$\;
   \eIf{elts = \fail}{\Return \fail}{\If{$elts\ne\fe$}{
     $changes := \true$\;
     $\fe := elts$;}}
}
\Return $[\fg,\fe]$;
\end{algorithm}

The correctness of Algorithm~\ref{alg:SimpleForcedIntegers} (\SimpleForcedIntegers) follows from Propositions~\ref{prop:correction_FurtherForcedGaps} and~\ref{prop:correction_FurtherForcedElements} which state the correction of the functions considered in Section~\ref{sec:preliminary_forced_ints}.
\begin{theorem}\label{th:correction_quick_forced_integers}
Algorithm~\ref{alg:SimpleForcedIntegers} correctly produces the claimed output. 
\end{theorem}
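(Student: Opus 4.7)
The plan is to establish two things: partial correctness (any non-\texttt{fail} output satisfies the specification, and any \texttt{fail} output reflects an actual inconsistency) and termination of the \textbf{repeat} loop. Since the algorithm is nothing more than a fixed-point iteration built on top of the two subroutines analyzed in the previous section, both parts should reduce directly to Propositions~\ref{prop:correction_FurtherForcedGaps} and~\ref{prop:correction_FurtherForcedElements}.

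For partial correctness, I would argue by induction on the number of completed iterations of the \textbf{repeat} loop, maintaining the invariant that at the top of each iteration, $\fg$ consists of \PFsf-forced gaps with $\fg \supseteq \fgs$, and $\fe$ consists of \PFsf-forced elements with $\fe \supseteq \fes$. The base case is immediate from the input hypothesis together with the use of \ShallowCopy, which produces identical sets without aliasing. For the inductive step, the assignment on Line~\ref{line:simple_further_gaps} invokes \furtherForcedGaps$(\fg,\fe)$: by Proposition~\ref{prop:correction_FurtherForcedGaps}, this either returns \fail{} (in which case $\calSoper(\PFsf)=\emptyset$, so the returned \fail{} correctly signals an inconsistency) or a superset of $\fg$ consisting entirely of \PFsf-forced gaps, which becomes the new $\fg$ and preserves the invariant. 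An analogous argument using Proposition~\ref{prop:correction_FurtherForcedElements} applies to Line~\ref{line:simple_further_elts}. When the loop exits with $changes = \false$, the invariant yields the required output specification.

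For termination, I would observe that the sets $\fg$ and $\fe$ remain bounded: every new element added to $\fg$ by \furtherForcedGaps{} is a positive divisor of some $g - e$ with $g\in\fg$ and $e\in\fe$, hence lies in $[1,\frobsf]$; every new element added to $\fe$ by \furtherForcedElements{} either lies in $[1,\frobsf-1]$ (elements forced by exclusion) or has the form $\frobsf-i$ with $1\le i < m$ (big forced elements). Thus $\fg \cup \fe \subseteq [0,\frobsf]$ throughout the execution. Since each iteration with $changes = \true$ strictly increases $|\fg| + |\fe|$, the loop must terminate after at most $\frobsf + 1$ iterations.

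The only subtle point is the handling of \fail: I need to make clear that the specification's phrase ``some inconsistency is discovered'' is exactly the event $\calSoper(\PFsf)=\emptyset$, which by Propositions~\ref{prop:correction_FurtherForcedGaps} and~\ref{prop:correction_FurtherForcedElements} is precisely the condition under which the two subroutines return \fail. Since $\calSoper(\PFsf)=\emptyset$ is a property of \PFsf{} and not of the current state $(\fg,\fe)$, once either subroutine detects it, propagating \fail{} is consistent. This step is arguably the main obstacle, but it is essentially a bookkeeping observation rather than a mathematical difficulty.
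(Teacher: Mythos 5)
Your proposal is correct and follows the same route as the paper, which proves the theorem in a single sentence by reducing it to Propositions~\ref{prop:correction_FurtherForcedGaps} and~\ref{prop:correction_FurtherForcedElements}; your loop invariant and termination argument simply make explicit the fixed-point reasoning the paper leaves implicit. (The only nitpick is that the forced elements may include $0$ and $\frobsf+1$, so the bound should be $[0,\frobsf+1]$, which does not affect the argument.)
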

Note that the forced elements returned by Algorithm~\ref{alg:SimpleForcedIntegers} (the list in the second component) are obtained by applying the function Closure to some set. Therefore we observe the following.
\begin{remark}\label{rem:forced_elts_are_small_elts_QV}
The second component of the list returned by Algorithm~\ref{alg:SimpleForcedIntegers} is the set of small elements of a numerical semigroup.
\end{remark}
\begin{example}\label{example:all_forced}
Let us see how the \SimpleForcedIntegers works for $\PFsf=\{16,29\}$. We already know (Example~\ref{example:starting_forced_gaps}) that the set of starting forced gaps is $sfg = \{1,2,4,8,13,16,29\}$.

Let us now make a call to \SimpleForcedIntegers with input $sfg, [\ ]$.

The first passage in Line~\ref{line:simple_further_gaps} does not produce any new integer.

The first passage in Line~\ref{line:simple_further_elts} produces 
$$\{0, 3, 6, 9, 12, 15, 18, 21, 24, 25, 27, 28, 30\}$$ 
as current forced elements. Observe that $3$ is forced by exclusion (note that $3=16-13$; also, $29-13=16$ and $16$ is a forced gap); $25$ is also forced by exclusion (note that $16-25<0$ and $29-25=4$ is a forced gap). Also, $21$ is forced by exclusion, but for now we do not need to worry with the multiples of $3$, because these will appear when taking the closure.

The second passage in Line~\ref{line:simple_further_gaps} produces 
$$\{1, 2, 4, 5, 7, 8, 10, 11, 13, 14, 16, 17, 20, 23, 26, 29\}$$ 
as current forced gaps.
To check it, observe that $29-3=26$, $29-6=23$, $29-9=20$, etc. are forced gaps. But then, $10$ and $5$ are (forced to be) gaps.

No further forced elements appear. In fact, the union of the sets of forced gaps and of forced elements is $\{0,\ldots,30\}$.

Therefore, all positive integers less than $29$ are forced. One can check that the closure of the set of forced elements does not produce new forced elements, thus it is the set of small elements of a numerical semigroup. Also, one can check that no forced gap outside $\{16,29\}$ is a pseudo-Frobenius number, thus one may conclude that there exists exactly one numerical semigroup $S$ such that $\PFoper(S)=\{16,29\}$.

This example illustrates that more than one passage through the \emph{repeat-until} loop of the algorithm \SimpleForcedIntegers may be needed.
\end{example}
%
\begin{example}\label{example:forced_picture}
In this example we present a \textsf{GAP} session, now for $\PFsf\{19,29\}$.
\begin{verbatim}
gap> pf := [19,29];;                            
gap> sfg := StartingForcedGapsForPseudoFrobenius(pf);
[ 1, 2, 5, 10, 19, 29 ]
gap> SimpleForcedIntegersForPseudoFrobenius(sfg,[],pf);
[ [ 1, 2, 4, 5, 10, 11, 19, 20, 29 ], [ 0, 9, 18, 24, 25, 27, 28, 30 ] ]
\end{verbatim}
Recall that the names used in our package are longer than the ones used in this manuscript, so for instance, \texttt{SimpleForcedIntegersForPseudoFrobenius} is the name we have used in our package for Algorithm~\ref{alg:SimpleForcedIntegers}. 
This example is related to Figure~\ref{fig:pfs_19_29}, where forced integers are highlighted.
\end{example}
\begin{example}\label{ex:forced_ints_for_tree}
Let us now apply the algorithm to $\PFsf=\{15, 20, 27, 35\}$. Again, we will use \textsf{GAP} to help us in doing the calculations (which can be easily confirmed by hand).
\begin{verbatim}
gap> pf := [ 15, 20, 27, 35 ];;
gap> sfg := StartingForcedGapsForPseudoFrobenius(pf);                    
[ 1, 2, 3, 4, 5, 6, 7, 8, 9, 10, 12, 15, 20, 27, 35 ]
\end{verbatim}
We immediately get that $\{ 25, 26, 28, 29, 30, 31, 32, 33, 34\}$ consists of forced big elements. And one can observe that $19$ and $23$ are forced by exclusion. This leads to the obtention of $35-19 =16$ as forced gap.
No other forced elements are obtained, which agrees with the following continuation of the \textsf{GAP} session:
\begin{verbatim}
gap> SimpleForcedIntegersForPseudoFrobenius(sfg,[],pf);
[ [ 1, 2, 3, 4, 5, 6, 7, 8, 9, 10, 12, 15, 16, 20, 27, 35 ], 
  [ 0, 19, 23, 25, 26, 28, 29, 30, 31, 32, 33, 34, 36 ] ]
\end{verbatim}
\end{example}

\subsection{Admissible integers}
\label{subsec:admissible_integers}
Let $G$ and $E$ respectively be sets of \PFsf-forced gaps and \PFsf-forced elements, and let $v$ be a free integer for $(G,E)$. We say that $v$ is \emph{admissible for} $(G,E)$ if Algorithm~\ref{alg:SimpleForcedIntegers} when applied to $(G,E\cup\{v\})$ does not return \emph{fail}. Otherwise, we say that $v$ is \emph{non-admissible for} $(G,E)$. Thus, $v$ is non-admissible implies that $v$ can not be an element of any semigroup in $\calSoper(\PFsf)$ and therefore is a gap of all semigroups in $\calSoper(\PFsf)$, that is, is a forced gap.

\begin{lemma}\label{lemma:admissible_ints}
Let  $G$ and $E$ be sets of forced gaps and forced elements, respectively. Let $v$ be free for $(G,E)$. If $v$ is non-admissible for $(G,E)$, then $v$ is a \PFsf-forced gap.
\end{lemma}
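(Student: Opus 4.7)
The plan is to prove the contrapositive: if $v$ is not a $\PFsf$-forced gap, then $v$ is admissible for $(G,E)$. Assume there exists $S \in \calSoper(\PFsf)$ with $v \in S$. Since $v$ is free for $(G,E)$, we have $1 \le v \le \frobsf = \Foper(S)$, and so $v \in \SEoper(S)$. Combined with the hypotheses $G \subseteq \Goper(S)$ and $E \subseteq \SEoper(S)$, this gives us a candidate semigroup $S$ compatible with the augmented input $(G, E \cup \{v\})$.

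I would then track the following invariant along the execution of Algorithm~\ref{alg:SimpleForcedIntegers} on input $(G, E \cup \{v\})$: the current $\fg$ is contained in $\Goper(S)$ and the current $\fe$ is contained in $\SEoper(S)$. The claim to prove is that the sub-functions \furtherForcedGaps and \furtherForcedElements both preserve this invariant and never return \emph{fail} when it holds. For \furtherForcedGaps, every newly produced candidate is of the form $g - e$ with $g \in \fg \subseteq \Goper(S)$ and $e \in \fe \subseteq \SEoper(S)$, hence is either negative or itself a gap of $S$ (otherwise $g = e + (g-e) \in S$). In particular, no candidate lies in $\fe$, so the test in Line~\ref{line:conflicts_gaps} passes. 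For \furtherForcedElements, the ``big'' candidates lie in $S$ by Lemma~\ref{lemma:big_elts}, since the $m$ computed by the function satisfies $m \le \moper(S)$ (because $\{1,\ldots,m-1\} \subseteq \fg \subseteq \Goper(S)$); the exclusion candidates produced in Line~\ref{line:excl_elts_if} lie in $S$ by the direct implication of Lemma~\ref{lemma:maximals}(ii) applied to $S$, while those produced in Line~\ref{line:excl_elts_only_if} lie in $S$ by the reverse implication of the same lemma; thus the test in Line~\ref{line:conflicts_elts} passes.

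It then follows by induction on the iterations of the \emph{repeat-until} loop that the algorithm on input $(G, E \cup \{v\})$ terminates without returning \emph{fail}, contradicting the non-admissibility of $v$. Therefore $v \in \calGFoper(\PFsf)$, as claimed. The main obstacle is essentially bookkeeping: one must re-run the correctness arguments of Propositions~\ref{prop:correction_FurtherForcedGaps} and~\ref{prop:correction_FurtherForcedElements}, but specialized to the concrete semigroup $S$ instead of stated generically, in order to guarantee that the specific witness $S$ survives every iteration and thereby forbids a failure.
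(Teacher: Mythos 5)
Your proof is correct and takes essentially the same route as the paper, whose entire justification for this lemma is the informal sentence preceding its statement ($v$ non-admissible implies $v$ cannot be an element of any $S\in\calSoper(\PFsf)$, hence is a gap of all of them). Your extra care is warranted: Propositions~\ref{prop:correction_FurtherForcedGaps} and~\ref{prop:correction_FurtherForcedElements} as stated assume the inputs consist of forced integers, which $E\cup\{v\}$ need not be, so re-running their arguments relative to the single witness semigroup $S$ is exactly the right fix.
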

Observe that a semigroup generated by admissible elements for some pair $(G,E)$ consists of admissible elements for $(G,E)$.

The function \ref{func:NonAdmissible}, with input a pair of sets of forced gaps and forced elements, returns non-admissible integers, which, by Lemma~\ref{lemma:admissible_ints}, are new forced gaps. This function is called by Algorithm~\ref{alg:ForcedIntegers}, a not so quick procedure to compute forced integers.
\begin{proposition}\label{prop:correction_NonAdmissible}
Let \textsf{OUT} be the output of the function \ref{func:NonAdmissible} for the input  $(\fg,\fe)$, with \fg and \fe consisting of \PFsf-forced gaps and \PFsf-forced elements, respectively. Then \textsf{OUT} is a set of integers that are non-admissible for $(\fg,\fe)$.
\end{proposition}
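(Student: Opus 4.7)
The plan is to argue directly from the definition of non-admissibility together with the correctness of the subroutine \SimpleForcedIntegers established in Theorem~\ref{th:correction_quick_forced_integers}. Recall that, by the definition in Subsection~\ref{subsec:admissible_integers}, an integer $v$ that is free for $(\fg,\fe)$ is non-admissible precisely when Algorithm~\ref{alg:SimpleForcedIntegers} returns \fail on input $(\fg,\fe\cup\{v\})$. So, provided the function \ref{func:NonAdmissible} only emits integers $v$ that are (i) free for $(\fg,\fe)$ and (ii) cause \SimpleForcedIntegers to fail when $v$ is adjoined to \fe, the claim is immediate.

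First, I would verify the input hypotheses are preserved when calling the subroutine. By assumption, \fg consists of \PFsf-forced gaps and \fe of \PFsf-forced elements, so the pair $(\fg,\fe\cup\{v\})$ is a legitimate input to \SimpleForcedIntegers, and Theorem~\ref{th:correction_quick_forced_integers} applies to the result. In particular, if the algorithm returns \fail, there is a genuine inconsistency forced by adding $v$ to \fe, which is exactly the content of non-admissibility for $(\fg,\fe)$.

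Next, I would run through the pseudo-code of \ref{func:NonAdmissible} and check two simple invariants: every integer $v$ that the function considers lies in $\{1,\dots,\frob\}\setminus(\fg\cup\fe)$ (i.e., is indeed free for $(\fg,\fe)$), and $v$ is added to the output only along the branch where the call \SimpleForcedIntegers($\fg,\fe\cup\{v\}$) returns \fail. Together with the observation in the previous paragraph, this yields that every element of the returned set is non-admissible for $(\fg,\fe)$.

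I do not expect a real obstacle here: the result is essentially a translation between the algorithmic definition of non-admissibility and what the function is coded to test. The only point that requires a bit of care is making sure the function correctly restricts its search to free integers (so that the defining hypothesis of non-admissibility is met) and does not inadvertently include elements of $\fg\cup\fe$ or integers above \frob; once this bookkeeping is confirmed by inspecting the pseudo-code, the correctness of \SimpleForcedIntegers does the rest.
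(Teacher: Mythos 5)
The paper states this proposition without a formal proof; the implicit justification is exactly the one you describe (the definition of non-admissibility plus inspection of the pseudo-code), so your overall strategy matches the paper's. However, your second ``simple invariant'' misreads the code and leaves a real gap. The function \ref{func:NonAdmissible} does \emph{not} add $v$ to its output on the branch where \SimpleForcedIntegers{$\fg,\fe\cup\{v\}$} fails. It accumulates a set $admissible$ as the union of the second components $pnfce[2]$ of the \emph{successful} calls, and returns $\Difference([1..\frob],admissible)$, i.e.\ the complement. Consequently the returned set contains, besides the relevant free integers, all of $\fg$ (and, in the degenerate case where no call succeeds, all of $\fe\cap[1..\frob]$ as well); these are not free for $(\fg,\fe)$, so the definition of non-admissibility does not even apply to them. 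The proposition itself is worded loosely on this point, and what the caller actually uses is $\Difference(nad,\fg)$, but your proof should acknowledge that the output is a complement rather than a list of failed tests.

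For the free integers in the output your conclusion is correct, but it needs an argument you have not supplied: a free integer $w$ lands in the output because it never appears in $admissible$, not because it was ``added when its test failed.'' You must observe that $totest$ only shrinks by removing either tested-and-failed integers or members of $admissible$, so a free $w\notin admissible$ must eventually become $totest[1]$ and be tested; and that if the call \SimpleForcedIntegers{$\fg,\fe\cup\{w\}$} had succeeded, then $w\in pnfce[2]\supseteq \fe\cup\{w\}$ would have placed $w$ in $admissible$. Hence the call failed and $w$ is non-admissible by definition. With that bookkeeping corrected, your appeal to Theorem~\ref{th:correction_quick_forced_integers} and the definition in Subsection~\ref{subsec:admissible_integers} does finish the proof for the free part of the output.
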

\begin{function}[ht]\caption{NonAdmissible()\label{func:NonAdmissible}}
\NonAdmissiblew{\fg,\fe}\Comment*[f]{\fg and \fe consist of forced gaps and forced elements, respectively}\\
    $admissible := \emptyset$\;
    $totest := \Difference([1..\frob], \Union(\fg,\fe))$;\Comment*[f]{start with the free integers}\\
    \While{$totest \ne\emptyset$}{
      $v := totest[1]$\;
      $pnfce:= \SimpleForcedIntegers(\fg,\Union(\fe,[v]),\PF)$\;
      \eIf{$pnfce \ne \fail$}{
        $admissible := \Union(admissible,pnfce[2])$\;
        $totest := \Difference(totest,admissible)$\;}{
        $totest := Difference(totest,[v])$;}
    }
  \Return $\Difference([1..\frob], admissible)$;
\end{function}
\begin{example}\label{example:non_admissible}
Let $PF=\{11, 22, 23, 25\}$. 
\begin{verbatim}
gap> pf := [ 11, 22, 23, 25 ];;
gap> sfg := StartingForcedGapsForPseudoFrobenius(pf);                    
[ 1, 2, 3, 4, 5, 6, 7, 11, 12, 14, 22, 23, 25 ]
\end{verbatim}
By using the function \SimpleForcedIntegers one obtains the following.
\begin{verbatim}
gap> SimpleForcedIntegersForPseudoFrobenius(sfg,[],pf);
[ [ 1, 2, 3, 4, 5, 6, 7, 11, 12, 14, 22, 23, 25 ], 
  [ 0, 18, 19, 20, 21, 24, 26 ] ]
\end{verbatim}
That $\{1, 2, 3, 4, 5, 6, 7, 11, 12, 14, 22, 23, 25\}$ consists of forced gaps and that the set $\{0, 18, 19, 20, 21, 24, 26\}$ can easily be confirmed by hand.

Let us now check that $15$ is non-admissible. If it was an element of a semigroup $S\in\calSoper(PF)$, then $10(=25-15)$ and $8(=23-15)$ would be gaps of $S$. But then $17$ is a big element, $13 (= 23-10)$ is forced by exclusion (note that $25-10=15$, $22-10=12$ and $11-10=1$ are gaps) and $9(=23-14)$ is forced by exclusion too ($25-14=11$, $22-14=8$ are gaps and $11-14<0$). This is not possible, since $13+9=22$ is a gap. Therefore $15$ is non-admissible.
\end{example}
\subsection{A not so quick procedure to compute forced integers}
\label{subsec:procedure_forced_integers}
Algorithm~\ref{alg:ForcedIntegers} is our procedure to compute forced integers that produces the best result in terms of the number of forced integers encountered. Besides Algorithm~\ref{alg:SimpleForcedIntegers}, it makes use of the function \ref{func:NonAdmissible}.
\begin{remark}\label{rem:forced_elts_are_small_elts}
It is a consequence of Remark~\ref{rem:forced_elts_are_small_elts_QV} that the second component of the list returned by Algorithm~\ref{alg:ForcedIntegers} is the set of small elements of a numerical semigroup.
\end{remark}

\begin{algorithm}[ht]\caption{ForcedIntegers\label{alg:ForcedIntegers}}
\Input{\PF}
\Output{\fail if some inconsistency is discovered; otherwise, returns [\fg, \fe], where \fg and \fe are sets of forced gaps and forced elements, respectively.}

  \If{\type = 1}{\Return [Divisors(\frob),[0,\frob+1]];}
    $fints := \SimpleForcedIntegers(\startingForcedGaps(\PF),[\ ])$\;
    \eIf{fints = \fail}{\Return \fail}{\If{\IsRange$(\Union(fints))$}{\Return fints;}}
    $nad := \nonAdmissible(fints[1],fints[2])$\;   
    $newgaps := \Difference(nad,fints[1])$\;
    \Return $\SimpleForcedIntegers(\Union(newgaps,fints[1]),fints[2])$;
\end{algorithm}
The correctness of Algorithm~\ref{alg:ForcedIntegers} follows from Proposition~\ref{prop:correction_NonAdmissible} and Theorem~\ref{th:correction_quick_forced_integers}.
\begin{theorem}\label{th:correction_forced_integers}
Algorithm~\ref{alg:ForcedIntegers} correctly produces the claimed output.
\end{theorem}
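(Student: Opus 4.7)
The plan is to verify correctness by chasing the algorithm line by line and invoking the results already established for the subroutines. There are essentially three cases to handle: the trivial one-element case, the case in which the first call to \SimpleForcedIntegers either fails or already exhausts the free integers, and the case in which \NonAdmissible produces further forced gaps.

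First I would dispose of the case $\typesf=1$. If $\PFsf=\{\frobsf\}$ and $S\in\calSoper(\PFsf)$, then $\frobsf=\Foper(S)$, so $0\in S$ and $\frobsf+1\in S$, giving $\{0,\frobsf+1\}\subseteq\calEFoper(\PFsf)$. Moreover any positive divisor $d$ of $\frobsf$ must be a gap, for if $d\in S$ then $\frobsf=(\frobsf/d)\cdot d\in S$, contradicting $\frobsf\in\Goper(S)$. Hence $\Divisors(\frobsf)\subseteq\calGFoper(\PFsf)$, and the initial return value has the claimed form. (Note that this branch never produces \fail\ because the semigroup $\{0,\frobsf+1,\to\}$ always witnesses $\calSoper(\PFsf)\neq\emptyset$.)

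For $\typesf\ge 2$, I would read off the correctness of each step from the preceding results. By Corollary~\ref{cor:starting_forced_gaps} and Proposition~\ref{prop:correction_StartingForcedGaps}, the call \startingForcedGaps$(\PFsf)$ returns a set of \PFsf-forced gaps (or \fail, in which case \SimpleForcedIntegers inherits the failure and the outer algorithm correctly returns \fail). Theorem~\ref{th:correction_quick_forced_integers} then guarantees that $fints$, when not \fail, is of the form $[\fg,\fe]$ with $\fg\subseteq\calGFoper(\PFsf)$ and $\fe\subseteq\calEFoper(\PFsf)$. The early return controlled by \IsRange is justified by Proposition~\ref{prop:intersection_gaps_elements}: if $\fg\cup\fe$ is an interval then there are no free integers on which to test admissibility, so the pair $(\fg,\fe)$ is already the best we can produce and returning it is correct.

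For the remaining branch, Proposition~\ref{prop:correction_NonAdmissible} tells me that every element of $nad$ is non-admissible for $(\fg,\fe)$, and Lemma~\ref{lemma:admissible_ints} upgrades each of these to a \PFsf-forced gap. Hence $\Union(newgaps,\fg)\subseteq\calGFoper(\PFsf)$, while $\fe$ is unchanged and still consists of \PFsf-forced elements, so the second invocation of \SimpleForcedIntegers receives a valid pair of forced gaps and forced elements as input and, again by Theorem~\ref{th:correction_quick_forced_integers}, returns either \fail\ (implying $\calSoper(\PFsf)=\emptyset$) or an enlarged pair with the desired property. Chaining these inclusions yields the claim.

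The only step that requires a bit of care, and that I regard as the main (mild) obstacle, is the \IsRange short-circuit: one has to check that it is not merely an optimization but a logically correct exit, i.e.\ that once $\fg\cup\fe$ forms an interval no further calls to \NonAdmissible can discover new forced gaps. This follows because \NonAdmissible tests free integers for $(\fg,\fe)$, and if no free integers remain its output lies entirely in $\fg$, so $newgaps=\emptyset$ and the subsequent \SimpleForcedIntegers call would merely reproduce the fixed point $(\fg,\fe)$. Everything else is bookkeeping, so assembling the three cases completes the argument.
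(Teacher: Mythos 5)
Your proof is correct and takes essentially the same route as the paper's, whose entire argument is the single sentence that correctness follows from Proposition~\ref{prop:correction_NonAdmissible} and Theorem~\ref{th:correction_quick_forced_integers}. Your version simply makes explicit the bookkeeping the paper leaves implicit (the $\typesf=1$ branch, the early exits, and the upgrade of non-admissible free integers to forced gaps via Lemma~\ref{lemma:admissible_ints}).
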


\subsection{Examples and execution times}
\label{subsec:examples_and_execution_times}
Algorithm~\ref{alg:SimpleForcedIntegers} can be used as a quick way to compute forced integers. In fact, when called with the starting forced gaps and the empty set of elements, can be seen as a quick version of Algorithm~\ref{alg:ForcedIntegers}. We use \begin{center}
\texttt{ForcedIntegers\_QV(\PFsf)}\end{center} 
 as a shorthand for \begin{center}
\texttt{SimpleForcedIntegers(\startingForcedGaps(\PFsf),[\ ])}
\end{center} 

In the following example we use the names for the functions in the current implementation.
\begin{example}
This example is meant to illustrate the difference between applying the quick and the normal version of the algorithm. 
\begin{verbatim}
gap> pf := [ 103, 110, 112, 137, 160, 178, 185 ];;              
gap> fQ := ForcedIntegersForPseudoFrobenius_QV(pf);;time;       
25
gap> fN := ForcedIntegersForPseudoFrobenius(pf);;time;          
262
gap> Length(fQ[1]);Length(fN[1]);Length(fQ[2]);Length(fN[2]);   
50
51
29
31
\end{verbatim}
We have used the internal names in the package \texttt{ForcedIntegersForPseudoFrobenius\_QV} and \texttt{Forced\-Integers\-For\-PseudoFrobenius} for \ForcedIntegersQV and \ForcedIntegers, respectively.

\end{example}
Table~\ref{fig:execution_time_forced_integers} collects some information concerning some execution times (as given by \textsf{GAP}) and the number of forced gaps and of forced elements both using Algorithm~\ref{alg:SimpleForcedIntegers} (identified as QV (which stands for \emph{quick version})) and Algorithm~\ref{alg:ForcedIntegers} (identified as NV (which stands for \emph{normal version})).
We observe that the execution times when using the quick version remain relatively small, even when the Frobenius number is large.


\begin{table}
\begin{center}
    \begin{tabular}{| l | l | l | l | l | l | l |}
    \hline
\multirow{2}{*}{pseudo Frobenius numbers}&\multicolumn{2}{|c|}{Time}&\multicolumn{2}{|c|}{\# F. Gaps}&\multicolumn{2}{|c|}{\# F. Elements}\\ \cline{2-7}
                          &QV & NV            & QV & NV                     &QV & NV           \\ \hline
[ 11, 22, 23, 25 ]&2&7&13&14&7&7\\ \hline
[ 17, 27, 28, 29 ]   &   2&5&16&17&8&10\\ \hline
[ 17, 19, 21, 25, 27 ]&2&9&15&16&8&8\\ \hline
[ 15, 20, 27, 35 ]&2&10&16&16&13&13\\ \hline
[ 12, 24, 25, 26, 28, 29 ]&3&11&18&22&6&9\\ \hline
[ 145, 154, 205, 322, 376, 380 ]&47&1336&82&85&52&54\\ \hline
[ 245, 281, 282, 292, 334, 373, 393, 424, 432, 454, 467 ]&129&2075&116&116&53&53\\ \hline
[ 223, 434, 476, 513, 549, 728, 828, 838, 849, 953 ]&213&5866&300&318&221&253\\ \hline
[ 219, 437, 600, 638, 683, 779, 801, 819, 880 ]&205&4838&219&224&153&161\\ \hline
[ 103, 110, 112, 137, 160, 178, 185 ]&25&262&50&51&29&31\\ \hline
    \end{tabular}
\end{center}
\caption{Execution times for computing forced integers.\label{fig:execution_time_forced_integers}}
\end{table}

Failure is usually detected very quickly, as should be clear and Table~\ref{fig:execution_time_failure} somehow confirms.


\begin{table}
\begin{center}
    \begin{tabular}{| l | l | l |}
    \hline
\multirow{2}{*}{pseudo Frobenius numbers}&\multicolumn{2}{|c|}{Time}\\ \cline{2-3}
                          &QV & NV\\ \hline
[ 18, 42, 58, 88, 94 ]&6&6\\ \hline
[ 20, 27, 34, 35, 37, 42, 48, 80 ]&2&3\\ \hline
[ 30, 104, 118, 147, 197, 292, 298, 315, 333, 384, 408 ]&32&43\\ \hline
[ 36, 37, 219, 233, 304, 410, 413, 431, 438, 458 ]&35&32\\ \hline
[ 89, 411, 446, 502, 557, 600, 605, 631, 636, 796, 801, 915 ]&223&233\\ \hline
[ 56, 134, 136, 137, 158, 248, 277, 373, 383, 389, 487, 558, 566, 621, 691, 825, 836 ]&103&113\\ \hline
    \end{tabular}
\end{center}
\caption{When failure occurs...\label{fig:execution_time_failure}}
\end{table}
As one could expect, there are examples where failure is not detected with the quick version.
\begin{verbatim}
gap> pf := [ 25, 29, 33, 35, 38, 41, 46 ];
[ 25, 29, 33, 35, 38, 41, 46 ]
gap> ForcedIntegersForPseudoFrobenius(pf);
fail
gap> ForcedIntegersForPseudoFrobenius_QV(pf);
[ [ 1, 2, 3, 4, 5, 6, 7, 8, 9, 10, 11, 12, 13, 16, 17, 19, 21, 23, 25, 29, 33, 35, 
 38, 41, 46 ], [ 0, 30, 34, 36, 37, 39, 40, 42, 43, 44, 45, 47 ] ]
gap> pf := [ 22, 23, 24, 25, 26 ];
[ 22, 23, 24, 25, 26 ]
gap> ForcedIntegersForPseudoFrobenius_QV(pf);
[ [ 1, 2, 3, 4, 5, 6, 8, 11, 12, 13, 22, 23, 24, 25, 26 ], [ 0, 20, 21, 27 ] ]
gap> ForcedIntegersForPseudoFrobenius(pf);
fail
\end{verbatim}

On the other hand, we have not been able to detect any set \PFsf\ candidate to be the set of pseudo-Frobenius numbers of a numerical semigroup that passes the normal version and such that $\calSoper(\PFsf)=\emptyset$. Despite the various millions of tests made, we do not feel comfortable on leaving it as a conjecture; we leave it as a question instead.
\begin{question}\label{quest:free_implies_nonempty}
If Algorithm~\ref{alg:ForcedIntegers} with input \PFsf\ does not return \emph{fail}, then $\calSoper(\PFsf)\ne\emptyset$?
\end{question}
We observe that in view of the execution times illustrated in Table~\ref{fig:execution_time_forced_integers}, a positive answer to Question~\ref{quest:free_implies_nonempty} would imply that Algorithm~\ref{alg:ForcedIntegers} constitutes a satisfactory answer to Question~\ref{quest:existence}. 

\section{An approach based on forced integers}
\label{sec:approach_based_on_forced_integers}
In this section we present our main algorithm (Algorithm~\ref{alg:NumericalSemigroupsWithPseudoFrobeniusNumbers}), which computes $\calSoper(\PFsf)$. Its correctness is stated in Theorem~\ref{th:correction_of_algorithm}, whose proof is built from almost all the results preceding it in the paper.

After considering some initial cases, the algorithm makes a call to \ref{func:RecursiveDepthFirstExploreTree} which is a recursive function used to construct a tree whose nodes are labeled by pairs $(X,Y)$ where $X$ is a list of forced gaps and $Y$ is a list of forced elements. Thus we implicitly have lists of free integers in each node: the complement of $X\cup Y$ in the set $U=\{1,\ldots, g_{n}\}$, where $\PFsf=\{g_1<\cdots < g_n\}$. Nodes with an empty set of free integers are the leafs in our tree. 

A node $(X,Y)$ such that there exists a numerical semigroup $S\in\calSoper(\PFsf)$ for which $X\subseteq \Goper(S)$ and $Y\subseteq \SEoper(S)$ is said to be \PFsf-\emph{admissible}, or simply \emph{admissible}, if \PFsf\ is understood. A node that is not admissible is called a \emph{dead node}. 

\begin{remark}
The knowledge of some forced integers allows us to identify immediately some dead nodes: if $(G,E)$ is such that $G$ consists of forced gaps and $E$ consists of forced elements, then any node $(X,Y)$ such that $X\cap E\ne \emptyset$ or $Y\cap G\ne \emptyset$ is a dead node.  
\end{remark}

\begin{remark}\label{rem:leaf_semigroup}
Let $(X,Y)$ be a leaf that is not a dead node. It follows from the construction (see Remarks~\ref{rem:forced_elts_are_small_elts_QV} and~\ref{rem:forced_elts_are_small_elts}) that there exists a numerical semigroup $S$ such that $(\Goper(S), \SEoper(S)) = (X,Y)$.  
\end{remark}

\begin{remark}\label{rem:call_irr}
	Observe that if $\PFsf=\{g_1\}$ or $\PFsf=\{g_1/2<g_1\}$, then the set of numerical semigroups with pseudo-Frobenius numbers $\PFsf$ corresponds with the set of irreducible numerical semigroups having Frobenius number $g_1$; see Appendix~\ref{sec:computing_arbitrary_type}. In this case we will use the fast procedure presented in \cite{br}.
\end{remark}	
\subsection{The recursive construction of a tree}
\label{subsec:recursive_construction_tree}
A naive idea is to start with a list of free integers (for some set of forced integers) and turn each one of these free integers into either a gap or an element. Assuming that the number of free integers is $n$, the number of possibilities is $2^n$, thus checking each of these possibilities for being in correspondence with the set of gaps of a numerical semigroup with \PFsf\ as set of pseudo-Frobenius numbers is unfeasible, unless $n$ is small. Nevertheless, this naive idea can be adapted and take advantage of the already observed facts that elements can force gaps and vice-versa. Although there are examples for which fixing one integer does not force anything else, which let us expect that nothing good can result from a worst case complexity analysis, in practice it works quite well. We give some examples, but leave a detailed analysis of the  complexity (perhaps average complexity) as an open problem.

The procedure we use is, for each integer $v$ in the current list of free integers, compute all numerical semigroups containing $v$ and the forced elements, and having the forced gaps as gaps. We proceed recursively and  use backtracking when a semigroup 
or a dead node is found.  When we revisit the node, we then suppose that $v$ is a gap and continue with the next free integer. 

Before proceeding with the pseudo-code for the recursive function \ref{func:RecursiveDepthFirstExploreTree} that constructs the tree in a depth first manner, let us give an example which we accompany by a picture (Figure~\ref{fig:tree}).
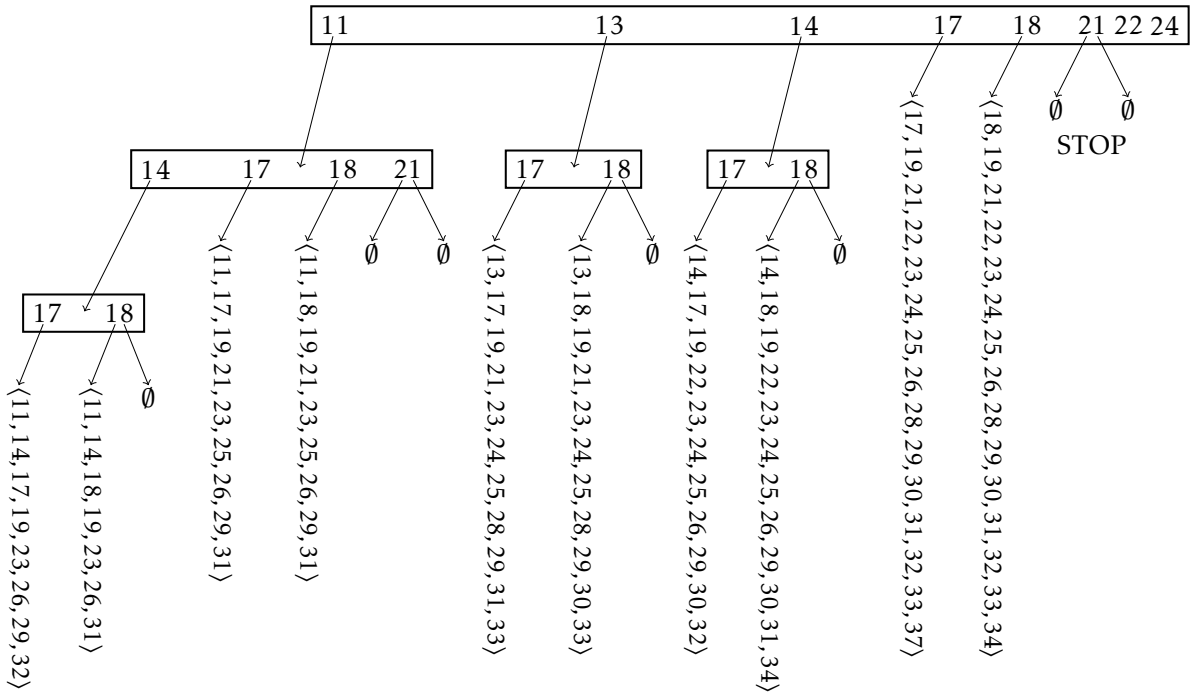
\begin{figure}[ht]
    \centering
    \resizebox{0.9\textwidth}{!}{%
\begin{tikzpicture}[inner sep = 0.5pt]
%
  \node (a11) at (3,10) {11};
  \node (a13) at (6.8,10) {13};
  \node (a14) at (9.5,10) {14};
  \node (a17) at (11.5,10) {17};
  \node (a18) at (12.6,10) {18};
  \node (a21) at (13.5,10) {21};
  \node (a22) at (14,10) {22};
  \node (a24) at (14.5,10) {24};
  \node[inner sep = 3pt, thick,draw, fit=(a11) (a13) (a14) (a17) (a18) (a21) (a22) (a24)] {};

  \node (a11b14) at (0.5,8) {14};
  \node (a11b17) at (1.9,8) {17};
  \node (l21middle) at (2.5,8) {};
  \node (a11b18) at (3.1,8) {18};
  \node (a11b21) at (4,8) {21};
  \path[->] (a11) edge (l21middle);
  \node[inner sep = 3pt, thick,draw, fit=(a11b14) (a11b17) (a11b18) (a11b21)] {};

  \node (a13b17) at (5.7,8) {17};
  \node (l22middle) at (6.3,8) {};
  \node (a13b18) at (6.9,8) {18};
  \path[->] (a13) edge (l22middle);
  \node[inner sep = 3pt,thick,draw, fit=(a13b17) (a13b18)] {};

  \node (a14b17) at (8.5,8) {17};
  \node (l23middle) at (9,8) {};
  \node (a14b18) at (9.5,8) {18};
  \path[->] (a14) edge (l23middle);
  \node[inner sep = 3pt,thick,draw, fit=(a14b17) (a14b18)] {};

  \node (a11b14c17) at (-1,6) {17};
  \node (l3middle) at (-0.5,6) {};
  \node (a11b14c18) at (0,6) {18};
  \path[->] (a11b14) edge (l3middle);
  \node[inner sep = 3pt,thick,draw, fit=(a11b14c17) (a11b14c18)] {};

 \node (sgp1middle) at (-1.4,5) [anchor=north] {\rotatebox{-90}{\small $\langle 11, 14, 17, 19, 23, 26, 29, 32 \rangle$}};
  \path[->] (a11b14c17) edge (-1.4,5);
  %

  \node (sgp2middle) at (-.4,5)  [anchor=north] {\rotatebox{-90}{\small $\langle 11, 14, 18, 19, 23, 26, 31 \rangle$}};
  \path[->] (a11b14c18) edge (-.4,5);
  %

  \node (d1) at (0.4,5) [anchor=north] {$\emptyset$};
  \path[->] (a11b14c18) edge (0.4,5);

  \node (sgp3middle) at (1.4,7) [anchor=north] {\rotatebox{-90}{\small $\langle 11, 17, 19, 21, 23, 25, 26, 29, 31 \rangle$}};
  \path[->] (a11b17) edge (1.4,7);
  %

  \node (sgp4middle) at (2.6,7) [anchor=north] {\rotatebox{-90}{\small $\langle 11, 18, 19, 21, 23, 25, 26, 29, 31 \rangle$}};
  \path[->] (a11b18) edge (2.6,7);
  %

  \node (d2) at (3.5,7) [anchor=north] {$\emptyset$};
  \path[->] (a11b21) edge (3.5,7);
  \node (d3) at (4.5,7) [anchor=north] {$\emptyset$};
  \path[->] (a11b21) edge (4.5,7);

  \node (sgp5middle) at (5.2,7) [anchor=north] {\rotatebox{-90}{\small $\langle 13, 17, 19, 21, 23, 24, 25, 28, 29, 31, 33 \rangle$}};
  \path[->] (a13b17) edge (5.2,7);
  %

  \node (sgp6middle) at (6.4,7) [anchor=north] {\rotatebox{-90}{\small $\langle 13, 18, 19, 21, 23, 24, 25, 28, 29, 30, 33 \rangle$}};
  \path[->] (a13b18) edge (6.4,7);
  %

  \node (d4) at (7.4,7) [anchor=north] {$\emptyset$};
  \path[->] (a13b18) edge (7.4,7);

   \node (sgp7middle) at (8,7) [anchor=north] {\rotatebox{-90}{\small $\langle 14, 17, 19, 22, 23, 24, 25, 26, 29, 30, 32 \rangle$}};
  \path[->] (a14b17) edge (8,7);
  %

  \node (sgp8middle) at (9,7)  [anchor=north]  {\rotatebox{-90}{\small $\langle 14, 18, 19, 22, 23, 24, 25, 26, 29, 30, 31, 34 \rangle$}};
  \path[->] (a14b18) edge (9,7);
  %

  \node (d5) at (10,7) [anchor=north] {$\emptyset$};
  \path[->] (a14b18) edge (10,7);

  \node (sgp9middle) at (11,9)  [anchor=north] {\rotatebox{-90}{\small $\langle 17, 19, 21, 22, 23, 24, 25, 26, 28, 29, 30, 31, 32, 33, 37 \rangle$}};
  \path[->] (a17) edge (11,9);
  %

  \node (sgp10middle) at (12.1,9)  [anchor=north] {\rotatebox{-90}{\small $\langle 18, 19, 21, 22, 23, 24, 25, 26, 28, 29, 30, 31, 32, 33, 34 \rangle$}};
  \path[->] (a18) edge (12.1,9);
  %

  \node (d6) at (13,9) [anchor=north] {$\emptyset$};
  \path[->] (a21) edge (13,9);
  \node (d7) at (14,9) [anchor = north] {$\emptyset$};
  \path[->] (a21) edge (14,9);
  \node (STOP) at (13.5,8.5) [thick, anchor=north] {STOP};

\end{tikzpicture}
}
  \caption{The numerical semigroups with pseudo-Frobenius numbers $\{15, 20, 27, 35\}$.\label{fig:tree}}
\end{figure}

\begin{example}\label{ex:tree}
Let $\PFsf = \{15, 20, 27, 35\}$. From Example~\ref{ex:forced_ints_for_tree} we have a pair of lists of forced gaps and forced integers, which leaves the list 
\begin{equation}\label{eq:free_list}
F=\{11, 13, 14, 17, 18, 21, 22, 24\}
\end{equation} 
of free integers.

The leaves contained in the branch that descends from $11$ in Figure~\ref{fig:tree} consist of the semigroups containing $11$ as forced element.

All the remaining semigroups in $\calSoper(\PFsf)$ must have $11$ as gap. The process then  continues as Figure~\ref{fig:tree} illustrates: considering $13$ as a forced integer, developing its corresponding subtree and so on.

Let us now look at the integer $21$ in the root of the tree. At this point, all the semigroups in $\calSoper(\PFsf)$ containing some integer preceding $21$ in $F$ have been computed. Thus, any new semigroup must have the elements in $\{11, 13, 14, 17, 18\}$ as gaps. From the computations one concludes that no new semigroup appears having $21$ as an element. Thus, if some not already computed semigroup exists fulfilling the current conditions, then it must have $21$ as gap. One can check that this can not happen. The remaining elements ($22$ and $24$) need not to be checked, since $21$ had to be either an element or a gap. Therefore we can stop.
\end{example}

Next we give pseudo-code for \ref{func:RecursiveDepthFirstExploreTree}. The recursion ends when no more than one free integer is left. A call to the function \ref{func:EndingCondition} is then made.

The variable \textbf{semigroups} works as a container which serves to store all the semigroups we are looking for, as they are discovered. 
\begin{function}[ht]\caption{EndingCondition()\label{func:EndingCondition}}
\EndingConditionw{\fgs,\fes}\Comment*[f]{\fgs and \fes are such that $\#U\setminus (\fgs\cup\fes) \le 1$}\\
\Comment*[f]{\fgs and \fes represent lists of gaps and elements, respectively}\\
    $free := \ShallowCopy(\Difference([1..\frob],\Union(\fgs,\fes)))$\;
\nl\label{line:range}     
  \If{$\Length(free) = 0$}{
 \nl\label{line:ending_condition}
\If{$\First(\Difference(\fgs,\PF), pf \to \Intersection(pf + \Difference(\fes,[0]),\fgs) = \emptyset) = \fail$}{$\Add(\semigroups, \NumericalSemigroupByGaps(\fgs))$;} \Return;}

\nl\label{line:one_free}     
    \If{$\Length(free) = 1$}{\If{$\RepresentsGapsOfNumericalSemigroup(g)$}{
 \nl\label{line:ending_condition_left}
\If{$\First(\Difference(g,\PF), pf \to \Intersection(pf + \Difference(\Union(e,free),[0]),g) = \emptyset) = \fail$}{$\Add(\semigroups, \NumericalSemigroupByGaps(g))$;}}
      \If{$\RepresentsGapsOfNumericalSemigroup(\Union(g,free))$}{
\nl\label{line:ending_condition_right}
\If{$\First(\Difference(g,\PF), pf \to \Intersection(pf + \Difference(e,[0]),Union(g,free)) = \emptyset) = \fail$}{$\Add(\semigroups, \NumericalSemigroupByGaps(\Union(g,free)))$;}}
    \Return;
    }
\end{function}
\begin{lemma}\label{lemma:ending_condition}
Function~\ref{func:EndingCondition} either does nothing or adds to \textbf{semigroups} a numerical semigroup $S$ such that $\PFoper(S)=\PFsf$.
\end{lemma}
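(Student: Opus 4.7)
The function has three conditional \texttt{Add} statements (one inside the $\Length(free)=0$ branch, two inside the $\Length(free)=1$ branch) and all other execution paths simply return without side effects. So I only need to prove: whenever any of these three \texttt{Add} lines fires, the argument passed to \NumericalSemigroupByGapsw\ is the gap set of a numerical semigroup $S$ and $\PFoper(S)=\PFsf$. I would structure the proof as three observations: well-formedness of the gap set, the inclusion $\PFoper(S)\subseteq\PFsf$ (which is exactly what the \First-test enforces), and the reverse inclusion $\PFsf\subseteq\PFoper(S)$ (which is an invariant coming from earlier in the paper).

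\textbf{Well-formedness.} In the $\Length(free)=0$ branch, $\fgs\cup\fes$ covers $\{0,1,\dots,\frob\}$, and by Remarks~\ref{rem:forced_elts_are_small_elts_QV} and~\ref{rem:forced_elts_are_small_elts} \fes\ is the set of small elements of some numerical semigroup propagated through the recursion; its complement \fgs\ is then necessarily a gap set. In the two sub-branches of $\Length(free)=1$ this is taken care of explicitly by the \RepresentsGapsOfNumericalSemigroupw\ guards, so there is nothing to do. Denote by $G$ and $E$ the gap set and the (small) element set actually used inside a given \texttt{Add}: in the first sub-branch $(G,E)=(\fgs,\fes\cup\mathit{free})$, in the second $(G,E)=(\fgs\cup\mathit{free},\fes)$.

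\textbf{Inclusion $\PFoper(S)\subseteq\PFsf$.} By Lemma~\ref{lemma:maximals}(ii), a gap $pf$ of $S$ is a pseudo-Frobenius number if and only if $pf+s\in S$ for every nonzero $s\in S$, equivalently $(pf+(E\setminus\{0\}))\cap G=\emptyset$. The condition
\[
\First\bigl(G\setminus\PFsf,\ pf\mapsto(pf+(E\setminus\{0\}))\cap G=\emptyset\bigr)=\fail
\]
that guards each \texttt{Add} therefore states exactly that no gap outside \PFsf\ is pseudo-Frobenius, yielding $\PFoper(S)\subseteq\PFsf$. The three guards of the function differ only in how $G$ and $E$ incorporate $\mathit{free}$, so the same reasoning covers all three cases.

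\textbf{Reverse inclusion $\PFsf\subseteq\PFoper(S)$.} This is the step that does \emph{not} come from the \First-test and where I would use Corollary~\ref{cor:starting_forced_gaps}: every difference $g_j-g_i$ (for $i\ne j$) is a starting forced gap, hence lies in $G\subseteq\Goper(S)$. Fix $g_i\in\PFsf$; since $g_i\in\Goper(S)$, Lemma~\ref{lemma:maximals}(ii) supplies some $\pi\in\PFoper(S)$ with $\pi-g_i\in S$, and by the inclusion just proved $\pi=g_j$ for some~$j$. If $j\ne i$ then $g_j-g_i$ is simultaneously an element of $S$ and a forced gap, impossible; so $\pi=g_i\in\PFoper(S)$.

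\textbf{Expected obstacle.} The delicate point is the $\Length(free)=1$, ``free integer is a gap'' sub-branch: the test loops over $\fgs\setminus\PFsf$ rather than over $(\fgs\cup\mathit{free})\setminus\PFsf$, so one must confirm that the free integer itself cannot spuriously become pseudo-Frobenius of the candidate semigroup. I expect this to follow from tracking the invariants of the recursive construction (\fes\ always contains the small elements of a numerical semigroup containing the free integer's predecessors), but making this rigorous is the step that needs the most care.
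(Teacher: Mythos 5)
Your argument for $\PFoper(S)\subseteq\PFsf$ coincides with the paper's entire proof, which consists of the single observation that the guards in Lines~\ref{line:ending_condition}, \ref{line:ending_condition_left} and~\ref{line:ending_condition_right} ensure that no forced gap outside \PFsf\ is pseudo-Frobenius. Your treatment of the reverse inclusion $\PFsf\subseteq\PFoper(S)$, via Corollary~\ref{cor:starting_forced_gaps} (the positive differences $g_j-g_i$ are forced gaps) together with Lemma~\ref{lemma:maximals}(ii), is correct and supplies a step the paper passes over in silence; the same goes for your well-formedness remarks. One small inaccuracy: the equivalence ``$pf\in\PFoper(S)$ iff $pf+s\in S$ for all nonzero $s\in S$'' is the definition of pseudo-Frobenius number, not Lemma~\ref{lemma:maximals}(ii).

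The obstacle you flag at the end is, however, the real gap, and it cannot be discharged by the invariant you propose. Note that Function~\ref{func:EndingCondition} only ever acts on a pair $(\fgs,\fes)$ that is a fixed point of Algorithm~\ref{alg:SimpleForcedIntegers}; in particular, by the fixed-point property of \ref{func:FurtherForcedGaps}, every positive difference $\gamma-\epsilon$ with $\gamma\in\fgs$ and $\epsilon\in\fes$ already lies in \fgs. Hence the lone free integer $v$ is not of that form, i.e.\ $v+\epsilon\notin\fgs$ for every $\epsilon\in\fes$. In the branch of Line~\ref{line:ending_condition_right} the candidate semigroup $S$ has $\Goper(S)=\fgs\cup\{v\}$, so $\SEoper(S)=\fes$, and the preceding observation yields $v+s\in S$ for every nonzero $s\in S$; that is, $v\in\PFoper(S)$ \emph{automatically}, while $v\notin\PFsf$. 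So a semigroup added in that branch can never satisfy $\PFoper(S)=\PFsf$: either the guards (\texttt{RepresentsGapsOfNumericalSemigroup} or the \texttt{First}-test over $\fgs\setminus\PFsf$) always reject it, which would have to be proved, or the test must be amended to range over $(\fgs\cup\mathit{free})\setminus\PFsf$, after which that branch provably never adds anything. Your expectation that ``tracking the invariants of the recursive construction'' settles it is therefore not enough; the resolution requires exactly the fixed-point analysis above, and the paper's one-sentence proof does not address this point either.
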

\begin{proof}
It suffices to observe that any of the conditions in the ``if'' statements of Lines~\ref{line:ending_condition},~\ref{line:ending_condition_left},~\ref{line:ending_condition_right} guarantee that no forced gap outside \PFsf\ can be a pseudo-Frobenius number.
\end{proof}
Notice that when \ref{func:EndingCondition} does nothing, it is because  one of the following reasons:
\begin{itemize}
\item there is no numerical semigroup whose set of gaps is the first component of the input,
\item the resulting semigroup does not have \PFsf\ as set of pseudo-Frobenius numbers (it actually has more pseudo-Frobenius numbers), 
\item there is a free element that cannot be neither a gap nor an element.
\end{itemize}
\begin{function}[ht]\caption{RecursiveDepthFirstExploreTree()\label{func:RecursiveDepthFirstExploreTree}}
\RecursiveDepthFirstExploreTreew{$[\fg,\fe]$}\Comment*[f]{\fg and \fe consist of forced gaps and}\\
\Comment*[f]{ forced elements, respectively}\\
    $currentfree := \ShallowCopy(\Difference([1..\frob],\Union(\fg,\fe)))$\;

    $nfg := \ShallowCopy(\fg)$; \Comment*[f]{used to store new forced gaps}\\
\Comment*[f]{without creating conflicts in the memory}\\
    \While{$\Length(currentfree) > 1$}{
      $v := currentfree[1]$\;
\nl\label{line:left}      $left := \SimpleForcedIntegers(nfg,\Union(\fe,[v]))$\;
\nl\label{line:recursion}      \eIf{$left = \fail$}{
        $right := \SimpleForcedIntegers(\Union(nfg,[v]),\fe)$\;
        \If{$(right = \fail) \Or (\Intersection(right[1],right[2]) \ne \emptyset)$}{\xbreak;}
        }{
      \recursiveDepthFirstExploreTree($[left[1],left[2]]$);
      }
      $nfg := \Union(nfg,[v])$\;
      $currentfree := \Difference(currentfree,[v])$\;
   }
\nl\label{line:single_free}     
  \If{$\Length(currentfree) \le 1$}{\endingCondition(\fg,\fe);}
\end{function}
Observe that recursion in \ref{func:RecursiveDepthFirstExploreTree} ends (in Line~\ref{line:one_free}) when there is at most one free element. 
\begin{proposition}\label{prop:correction_recursive_function}
Let $(fg,fe)$ be a pair of disjoint sets of integers contained in $U=\{0,\ldots,g_n+1\}$. 
After the execution of the function \ref{func:RecursiveDepthFirstExploreTree} with input $(fg,fe)$, 
\textbf{semigroups} contains all numerical semigroups $S$ such that  $S\in \calSoper(\PFsf)$, $fg\subseteq \Goper(S)$ and $fe\subseteq \SEoper(S)$.
\end{proposition}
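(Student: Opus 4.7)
The plan is strong induction on the number $k=\#\bigl(U\setminus(\mathit{fg}\cup\mathit{fe})\bigr)$ of free integers, where $U=\{1,\ldots,g_n\}$. Throughout, I shall use the following monotonicity property of Algorithm~\ref{alg:SimpleForcedIntegers}: when its first and second arguments are contained, respectively, in the gap set and in the small-element set of some concrete $S\in\calSoper(\PFsf)$, the procedure cannot return \fail, and its output remains a pair of sets of gaps and small elements of~$S$. This is obtained by iterating Propositions~\ref{prop:correction_FurtherForcedGaps} and~\ref{prop:correction_FurtherForcedElements}, whose correctness statements hold for inputs that are gaps/elements of any given $S\in\calSoper(\PFsf)$.

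\emph{Base case} $(k\le 1)$. The \emph{while} loop is skipped and control passes directly to the function~\ref{func:EndingCondition}. Any $S$ satisfying $\mathit{fg}\subseteq\Goper(S)$ and $\mathit{fe}\subseteq\SEoper(S)$ has $\Goper(S)=\mathit{fg}$ (if $k=0$) or $\Goper(S)\in\{\mathit{fg},\,\mathit{fg}\cup\{v\}\}$ for the unique free integer $v$ (if $k=1$). Inspection of~\ref{func:EndingCondition} shows that the corresponding configuration is explicitly tested; since $\PFoper(S)=\PFsf$, the pseudo-Frobenius safeguard passes and $S$ is added via \texttt{NumericalSemigroupByGaps}, cf.\ Lemma~\ref{lemma:ending_condition}.

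\emph{Inductive step} $(k\ge 2)$. Fix a target $S\in\calSoper(\PFsf)$ with $\mathit{fg}\subseteq\Goper(S)$ and $\mathit{fe}\subseteq\SEoper(S)$, and trace the while loop on $S$. Let $v_1,\ldots,v_{k-1}$ be the values successively assumed by the variable $v$ and let $v_k$ be the last remaining free integer. Choose the least index $j\in\{1,\ldots,k\}$ with $v_j\in\SEoper(S)$, or set $j=k+1$ if none exists. For every $i<\min\{j,k\}$ one has $v_i\in\Goper(S)$, so monotonicity guarantees that iteration $i$ does not terminate the loop via \xbreak: if its left-branch call happens to fail (its second argument $\mathit{fe}\cup\{v_i\}$ need not be contained in $\SEoper(S)$), then the right-branch call receives $\mathit{nfg}\cup\{v_i\}\subseteq\Goper(S)$ and $\mathit{fe}\subseteq\SEoper(S)$ and therefore neither fails nor yields a conflict. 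Consequently the loop faithfully reaches iteration $j$ whenever $j\le k-1$.

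At iteration $j$ (assuming $j\le k-1$), the left-branch call on $(\mathit{nfg},\mathit{fe}\cup\{v_j\})$ has both inputs contained in the gaps and small elements of $S$, so monotonicity yields $(\mathit{fg}',\mathit{fe}')$ with $\mathit{fg}'\subseteq\Goper(S)$, $\mathit{fe}'\subseteq\SEoper(S)$ and $v_j\in\mathit{fe}'$; the recursive call has strictly fewer free integers and the induction hypothesis inserts $S$ into \textbf{semigroups}. If instead $j\in\{k,k+1\}$, every $v_i$ processed in the loop is a gap of $S$, the loop reaches its natural termination, and~\ref{func:EndingCondition} adds $S$ through its one-free-integer branch, which tests both possibilities for $v_k$, by Lemma~\ref{lemma:ending_condition}. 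The main technical obstacle is precisely this uniform monotonicity/non-failure lemma for Algorithm~\ref{alg:SimpleForcedIntegers}: once it is in hand, both the preservation of the target branch along successive iterations and the correctness of the recursive descent follow without further difficulty.
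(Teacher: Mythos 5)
Your proof is correct and follows essentially the same route as the paper's: induction on the number of free integers, tracing a fixed target $S\in\calSoper(\PFsf)$ through the while loop with a case split on whether the current free integer lies in $S$, and delegating the base case to the function \ref{func:EndingCondition} via Lemma~\ref{lemma:ending_condition}. The only difference is one of care rather than of strategy: you isolate explicitly the relativized non-failure/monotonicity property of Algorithm~\ref{alg:SimpleForcedIntegers} (its output stays inside $\Goper(S)$ and $\SEoper(S)$ when its input does), which the paper invokes more tersely through Theorem~\ref{th:correction_quick_forced_integers}, and you unroll the loop to locate the first iteration whose free integer belongs to $S$ instead of re-applying the induction hypothesis after each gap-iteration.
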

\begin{proof}
Denote by $\Lambda$ the set of all numerical semigroups $S$ such that  $fg\subseteq \Goper(S)$, $fe\subseteq \SEoper(S)$. We have to prove that $\mathbf{semigroups}\cap \Lambda = \calSoper(\PFsf)\cap \Lambda$.

\noindent $\subseteq$. It suffices to observe that the numerical semigroups are added to \textbf{semigroups} by the function \ref{func:EndingCondition} and these belong to $\calSoper(\PFsf)$, by Lemma~\ref{lemma:ending_condition}.

\noindent $\supseteq$. 
Let $S\in \calSoper(\PFsf)$ be such that $fg\subseteq \Goper(S)$ and $fe\subseteq \SEoper(S)$. 
If $\# U\setminus(fg\cup fe)\in\{0,1\}$, then the function \ref{func:EndingCondition} is called and it gives the correct output. Otherwise, we enter a recursion.

We will prove by induction on the number of free elements that the output is correct.
Let us consider the smallest integer $v\in U\setminus(fg\cup fe)$. \ref{func:RecursiveDepthFirstExploreTree} is called with input $(fg,fe)$, and will enter the while loop, adding $v$ to $fe$ and computing new forced integers (\SimpleForcedIntegers). 
\begin{itemize}
\item If $v\in S$, then $left$ in Line~\ref{line:left} will not be equal to \fail (Theorem \ref{th:correction_quick_forced_integers}), and we will call \ref{func:RecursiveDepthFirstExploreTree} with a lager set $fe$, having in consequence less  free integers. By induction, $S$ is added to \textbf{semigroups}. 
\item Now assume that $v\not\in S$. After the execution of  the if-then-else starting in Line~\ref{line:recursion}, $v$ is added to the set of gaps. We have then one element less in the list of free integers, $fg\cup\{v\}\subseteq \Goper(S)$ and $fe\subseteq \SEoper(S)$, whence the effect is the same as if we called \ref{func:RecursiveDepthFirstExploreTree}  with arguments $fg\cup\{v\}\subseteq \Goper(S)$ and $fe\subseteq \SEoper(S)$. By induction hypothesis, $S$ is added to \textbf{semigroups}.\qedhere
\end{itemize}
\end{proof}
Observe that if $\calSoper(\PFsf)\ne\emptyset$ and the sets $\mathsf{fg}$ and $\mathsf{fe}$ considered in Proposition~\ref{prop:correction_recursive_function} consist of forced gaps and forced elements, respectively, then $\Lambda\subseteq\calSoper(\PFsf)$. Therefore we have proved the following corollary.
\begin{corollary}\label{cor:recursive_function}
If the function \ref{func:RecursiveDepthFirstExploreTree} is called with a pair $({fg,fe})$, where ${fg}$ consists of forced gaps and ${fe}$ consists of forced elements, then at the end of its execution we have that $\mathbf{semigroups}=\calSoper(\PFsf)$.
\end{corollary}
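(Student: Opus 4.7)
The plan is to deduce the corollary as a direct consequence of Proposition~\ref{prop:correction_recursive_function} together with Lemma~\ref{lemma:ending_condition}. Let $\Lambda$ denote, as in the proof of that proposition, the set of all numerical semigroups $S$ with $\mathsf{fg}\subseteq\Goper(S)$ and $\mathsf{fe}\subseteq\SEoper(S)$. The whole argument hinges on the observation that, under the hypothesis of the corollary, $\Lambda$ coincides with $\calSoper(\PFsf)$ on the relevant side, namely $\calSoper(\PFsf)\subseteq\Lambda$.

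First I would verify this containment. By definition of $\calGFoper(\PFsf)$, every $S\in\calSoper(\PFsf)$ satisfies $\calGFoper(\PFsf)\subseteq\Goper(S)$; similarly, $\calEFoper(\PFsf)\subseteq\SEoper(S)$. Since by assumption $\mathsf{fg}$ consists of \PFsf-forced gaps and $\mathsf{fe}$ consists of \PFsf-forced elements, we have $\mathsf{fg}\subseteq\calGFoper(\PFsf)\subseteq\Goper(S)$ and $\mathsf{fe}\subseteq\calEFoper(\PFsf)\subseteq\SEoper(S)$ for every $S\in\calSoper(\PFsf)$. Hence $\calSoper(\PFsf)\subseteq\Lambda$, and in particular $\calSoper(\PFsf)\cap\Lambda=\calSoper(\PFsf)$.

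Next I would apply Proposition~\ref{prop:correction_recursive_function}, which gives that after execution $\mathbf{semigroups}\cap\Lambda=\calSoper(\PFsf)\cap\Lambda$. Combined with the previous step, this yields $\mathbf{semigroups}\cap\Lambda=\calSoper(\PFsf)$, so in particular $\mathbf{semigroups}\supseteq\calSoper(\PFsf)$. For the reverse inclusion, I would recall that the only place where elements are added to $\mathbf{semigroups}$ is inside the function \ref{func:EndingCondition}, and by Lemma~\ref{lemma:ending_condition} every semigroup added there lies in $\calSoper(\PFsf)$; therefore $\mathbf{semigroups}\subseteq\calSoper(\PFsf)$ throughout the execution.

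Putting the two inclusions together gives $\mathbf{semigroups}=\calSoper(\PFsf)$, as claimed. There is no real obstacle here: all the nontrivial work, namely the inductive argument on the number of free integers and the verification that the recursion explores every relevant branch, is already carried out in the proof of Proposition~\ref{prop:correction_recursive_function}; the corollary just packages that result under the natural input that makes $\Lambda$ trivially contain $\calSoper(\PFsf)$.
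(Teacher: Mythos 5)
Your proof is correct and follows essentially the same route as the paper: it combines Proposition~\ref{prop:correction_recursive_function} with the observation that \PFsf-forced gaps and \PFsf-forced elements are, by definition, gaps and small elements of every $S\in\calSoper(\PFsf)$, and uses Lemma~\ref{lemma:ending_condition} for the containment $\mathbf{semigroups}\subseteq\calSoper(\PFsf)$. If anything, your write-up is more careful than the paper's one-line justification, which asserts $\Lambda\subseteq\calSoper(\PFsf)$ (an inclusion that is false in general, since $\Lambda$ may contain semigroups with strictly larger pseudo-Frobenius sets); the inclusion actually needed, and the one you establish, is $\calSoper(\PFsf)\subseteq\Lambda$.
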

\subsection{The main algorithm}
\label{subsec:the_main_algorithm}
\begin{algorithm}[ht]\caption{NumericalSemigroupsWithPseudoFrobeniusNumbers\label{alg:NumericalSemigroupsWithPseudoFrobeniusNumbers}}
\Input{\PF}
\Output{the set $\calSoper(\PFsf)$.}
\BlankLine
\nl\label{line:type1or2} 
\If{$(\type = 1) \Or  (\type = 2 \And \PF[1]=\PF[2]/2)$}{\Return \textrm{IrreducibleNumericalSemigroupsWithFrobeniusNumber}(\frob);}
\nl\label{line:basic_condition} 
\If{$\Not\ (\PF[\type] - \PF[\type-1] > \PF[1])$}{\Return $\emptyset$;}

\nl\label{line:forced_fail} 
$root$:=\ForcedIntegers(\PF)\;

\If{$root = \fail$}{\Return $\emptyset$;}
$\semigroups:=\emptyset$\;
$\recursiveDepthFirstExploreTree(root)$\;
\Return \semigroups\;
\end{algorithm}

We observe that Algorithm~\ref{alg:NumericalSemigroupsWithPseudoFrobeniusNumbers} is not efficient when there are many free integers (for some set of forced integers). A possible attempt to improve it could be to replace the recursive function by a much more efficient depth first search algorithm to parse the tree in question. Another one is to use other sophisticated theoretical tools. What is done in Appendix~\ref{sec:computing_arbitrary_type} could be seen as an attempt. Having at our disposal more than one approach, we can take advantage of choosing the most efficient for each situation. 
\begin{theorem}\label{th:correction_of_algorithm}
Let $\PFsf=\{g_1<\dots<g_n\}$ be a set of positive integers.
The output of Algorithm~\ref{alg:NumericalSemigroupsWithPseudoFrobeniusNumbers} with input \PFsf\ is $\calSoper(\PFsf)$.
\end{theorem}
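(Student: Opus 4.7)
The plan is a branch-by-branch verification of Algorithm~\ref{alg:NumericalSemigroupsWithPseudoFrobeniusNumbers}, reducing each step to a previously established result and concluding with the correctness of the recursive call. Write $\PFsf = \{g_1 < \cdots < g_n\}$.

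First I would dispatch the two short-circuit branches. Line~\ref{line:type1or2} covers the irreducible cases (either $n=1$, or $n=2$ with $g_1 = g_2/2$); here Remark~\ref{rem:call_irr} identifies $\calSoper(\PFsf)$ with the set of irreducible numerical semigroups whose Frobenius number equals $g_n$, which is exactly what the routine from \cite{br} returns. Line~\ref{line:basic_condition} is the sanity check supplied by Corollary~\ref{cor:naive_condition_g1}: when its inequality fails, the corollary guarantees $\calSoper(\PFsf) = \emptyset$, so returning the empty set is correct.

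Next, Line~\ref{line:forced_fail} invokes Algorithm~\ref{alg:ForcedIntegers}, whose behaviour is characterised by Theorem~\ref{th:correction_forced_integers}. A \fail\ output signals, via Proposition~\ref{prop:intersection_gaps_elements}, that the forced gaps and forced elements would overlap, so $\calSoper(\PFsf) = \emptyset$ and the algorithm returns $\emptyset$ correctly. Otherwise the pair $\mathit{root} = (\fg,\fe)$ consists of a set $\fg$ of \PFsf-forced gaps and a set $\fe$ of \PFsf-forced elements; the algorithm then initialises the accumulator \textbf{semigroups} to the empty list and launches \ref{func:RecursiveDepthFirstExploreTree} on $\mathit{root}$. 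Since the hypothesis of Corollary~\ref{cor:recursive_function} is precisely that the two sets passed in are forced, that corollary yields $\mathbf{semigroups} = \calSoper(\PFsf)$ upon termination, which is then returned.

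Termination of the recursive procedure itself is clear: each recursive descent strictly decreases the cardinality of the set of free integers, and the base case (at most one free integer remaining) is disposed of by \ref{func:EndingCondition}. Altogether, the proof is a wiring-together exercise in which the delicate work has already been carried out in Propositions~\ref{prop:correction_FurtherForcedGaps}, \ref{prop:correction_FurtherForcedElements}, \ref{prop:correction_NonAdmissible} and \ref{prop:correction_recursive_function}; the only point that demands a moment's attention is that the output of Algorithm~\ref{alg:ForcedIntegers} matches the hypothesis of Corollary~\ref{cor:recursive_function} on the nose, which is immediate from Theorem~\ref{th:correction_forced_integers}. Beyond this bookkeeping there is no real obstacle in the argument.
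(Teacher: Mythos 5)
Your proposal is correct and follows essentially the same route as the paper's own proof: a line-by-line verification of the algorithm citing Remark~\ref{rem:call_irr}, Corollary~\ref{cor:naive_condition_g1}, Proposition~\ref{prop:intersection_gaps_elements} (via Theorem~\ref{th:correction_forced_integers}), and finally Corollary~\ref{cor:recursive_function}. The brief remark on termination is a small addition the paper leaves implicit, but it does not change the argument.
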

\begin{proof}
Line~\ref{line:type1or2} is justified by Remark~\ref{rem:call_irr}. 
The justification for Line~\ref{line:basic_condition} comes from Corollary~\ref{cor:naive_condition_g1}. If the stated necessary condition is not fulfilled, then $\calSoper(\PFsf)=\emptyset$, which is precisely the set returned by the algorithm. 
As Algorithm~\ref{alg:ForcedIntegers} returns \emph{fail} precisely when it is found some forced gap that is at the same time a forced element, Proposition~\ref{prop:intersection_gaps_elements} assures us that $\calSoper(\PFsf)=\emptyset$, which is the set returned by the algorithm.

When nothing of the above holds, the variable \textbf{semigroups} is initialized as the empty set and it is made a call to the recursive function \ref{func:RecursiveDepthFirstExploreTree}. As we are considering this variable global to the functions \ref{func:EndingCondition},
the result follows from Corollary~\ref{cor:recursive_function}.
\end{proof}
\section{Running times and examples}
\label{sec:running_times}
The number of semigroups can be quite large compared to the number of free elements. The following example illustrate this possibility.
\begin{example}\label{example:free_give_rise to_many}
\begin{verbatim}
gap>  pf := [ 68, 71, 163, 196 ];;
gap> forced := ForcedIntegersForPseudoFrobenius(pf);;
gap> free := Difference([1..Maximum(pf)],Union(forced));;
gap> Length(free);
38
gap> list := NumericalSemigroupsWithPseudoFrobeniusNumbers(pf);;
gap> Length(list);
1608
\end{verbatim}
In the continuation of previous \textsf{GAP} session we do a kind of verification of the result.
\begin{verbatim}
gap> ns := RandomList(list);
<Numerical semigroup>
gap> MinimalGeneratingSystem(ns);
[ 35, 38, 65, 81, 89, 94, 99, 101, 104, 106, 109, 110, 112, 113, 117, 118, 
  121, 122, 133 ]
gap> PseudoFrobeniusOfNumericalSemigroup(ns);
[ 68, 71, 163, 196 ]
\end{verbatim}
\end{example}
Table \ref{fig:execution_time_main} is meant to illustrate some timings. Its content was produced by using repeatedly the commands of the first part of the previous example. The candidates to $\PFsf$ were obtained randomly.
We observe that, although depending on some factors (such as the type or the Frobenius number we are interested in), the vast majority of the candidates would lead to the empty set. We do not consider them in this table (some were given in Table~\ref{fig:execution_time_failure}). 

\begin{table}
\begin{center}
    \begin{tabular}{| l | r | r | r |}
    \hline
pseudo Frobenius numbers &\# free &\# semigroups & time\\ \hline
[ 15, 27, 31, 43, 47 ]& 0& 1 &3\\ \hline
[ 16, 30, 33, 37 ]& 9& 3 &12\\ \hline 
[ 40, 65, 80, 89, 107, 110, 130 ]& 5& 3 &29\\ \hline
[ 32, 35, 44, 45, 48 ]& 13& 7 &24\\ \hline
[ 40, 65, 89, 91, 100, 106 ]&24&9&99\\ \hline
[ 36, 50, 56, 57, 63 ]& 25& 39& 123\\ \hline
[ 43, 50, 52, 65 ]&35&213&605\\ \hline
[ 68, 71, 163, 196 ]&38&1608&16603\\ \hline 
[ 38, 57, 67, 74, 79 ]& 40& 155 &527\\ \hline
[ 68, 72, 76, 77 ]& 46& 177 &607\\ \hline
[ 62, 78, 99, 129, 130 ]& 53& 4077 &28622\\ \hline
[ 128, 131, 146, 151, 180, 216, 224, 267, 271, 287 ]& 54& 954 &24253\\ \hline 
[ 84, 103, 144, 202, 230, 242, 245 ]& 56& 14292& 277094\\ \hline 
[ 66, 85, 86, 92 ]&55&950&4683\\ \hline
[ 76, 79, 88, 102 ]&64&1409&6505\\ \hline
[ 61, 67, 94, 105 ]&69&4432&21471\\ \hline
[ 114, 150, 179, 182, 231, 236, 254, 321 ]& 69& 302929& 7121020\\ \hline 
[ 62, 73, 166, 190, 203 ]&77&9934&134554\\ \hline
[ 102, 104, 118, 123, 134, 146, 149 ]&87&15910&149910\\ \hline
   \end{tabular}
\end{center}
\caption{Some examples of execution data of the main algorithm.\label{fig:execution_time_main}}
\end{table}

\section{Random}
\label{sec:random}
Sometimes one may just be interested in obtaining one numerical semigroup with \PFsf\ as set of pseudo-Frobenius numbers.
Algorithm~\ref{alg:NumericalSemigroupsWithPseudoFrobeniusNumbers} may be too slow (it gives much information that will not be used).
One could adapt the algorithm to stop once it encounters the first semigroup, but the information had to be transmitted recursively and one would end up with a slow algorithm.
Next we propose an alternative (Algorithm~\ref{alg:RandomNumericalSemigroupWithPseudoFrobeniusNumbers}), the first part of which is similar to the initial part of Algorithm~\ref{alg:NumericalSemigroupsWithPseudoFrobeniusNumbers}. The main difference is in the usage of the function \textrm{AnIrreducibleNumericalSemigroupsWithFrobeniusNumber} instead of the function \textrm{IrreducibleNumericalSemigroupsWithFrobeniusNumber}, both available in the \textsf{numericalsgps} package.
For the second part, instead of calling the recursive function, it tries to gess a path that leads to a leaf. Starts choosing at random a free integer $v$ and tests its non admissibility (by checking whether \SimpleForcedIntegers returns \fail when called with $v$ as if it was forced). If one does not conclude that $v$ is non admissible, it is assumed to be a forced integer.
There is an option that is part of the implementation to give a bound for the maximum number of attempts the function does. Its usage is ilustrated in Examples~\ref{example:random1} and~\ref{example:random2}.
\begin{algorithm}[ht]\caption{RandomNumericalSemigroupWithPseudoFrobeniusNumbers\label{alg:RandomNumericalSemigroupWithPseudoFrobeniusNumbers}}
\Input{\PF,\ max\_attempts}
\Output{One numerical semigroup S (at random) such that $\PFfunction(S)=\PF$ if it discovers some, \fail if it discovers that no semigroup exists... Otherwise, suggests to use more attempts}
\If{$(\type = 1) \Or (\type = 2 \And \PF[1]=\PF[2]/2)$}{\Return \textrm{AnIrreducibleNumericalSemigroupsWithFrobeniusNumber}(\frob);}
\If{$\Not\ \PF[\type] - \PF[\type-1] > \PF[1])$}{\Return \fail;}
$f\_ints := \ForcedIntegers(\PF)$\;
\If{f\_ints = \fail}{\Return \fail;}
$free := \Difference([1..\frob],\Union(f\_ints))$\;
\For{$i\in[1..max\_attempts]$}{
   \While{$free\ne\emptyset$}{
      $v := \RandomList(free)$\;
      $nfig := \SimpleForcedIntegers(\Union(f\_ints[1],[v]),f\_ints[2])$\;
      $nfie := \SimpleForcedIntegers(f\_ints[1],\Union(f\_ints[2],[v]))$\;
      \eIf{$nfig\ne \fail$}{\eIf{$\IsRange(\Union(nfig))$}{\Return $\NumericalSemigroupByGaps(nfig[1])$;}{$free := \Difference([1..\frob],\Union(nfig))$;}}
      {\eIf{$nfie\ne \fail$}{\eIf{$\IsRange(\Union(nfie))$}{\Return $\NumericalSemigroupByGaps(nfie[1])$;}{$free := \Difference([1..\frob],\Union(nfie))$;}}{\xbreak;}}
}}
\Comment{Info: Increase the number of attempts...}
\end{algorithm}

It may happen that no semigroup has the given set as set of pseudo-Frobenius elements, and thus the output will simply be \fail.
\begin{example}\label{example:random1}
We look for a random numerical semigroup with $\PFsf=\{100,453,537,543\}$. The first execution of the function yields:
\begin{verbatim}
gap> pf := [ 100, 453, 537, 543 ];;
gap> ns := RandomNumericalSemigroupWithPseudoFrobeniusNumbers(pf);;time;
 MinimalGeneratingSystem(ns);
2440
[ 66, 94, 106, 126, 166, 184, 194, 206, 209, 216, 224, 230, 235, 246, 256, 263,
 267, 284, 295, 309, 363, 374, 379, 385, 391, 413 ]
\end{verbatim}
While if we execute it a second time we get another semigroup with the desired set of pseudo-Frobenius numbers (clearly, the reader might obtain different outputs):
\begin{verbatim}
gap> pf := [ 100, 453, 537, 543 ];;
gap> ns := RandomNumericalSemigroupWithPseudoFrobeniusNumbers(pf);;time; 
MinimalGeneratingSystem(ns);
7302
[ 94, 106, 123, 134, 162, 178, 184, 194, 204, 206, 222, 223, 234, 235, 238, 248,
 251, 262, 263, 266, 270, 276, 279, 283, 293, 304,  313, 336, 348, 367, 383, 415 ]
\end{verbatim}
\end{example}
\begin{example}\label{example:random2}
If one of the free integers can neither be a gap nor an element, no semigroup exists.
\begin{verbatim}
gap> pf := [ 30, 104, 118, 147, 197, 292, 298, 315, 333, 384, 408 ];;
gap> ns := RandomNumericalSemigroupWithPseudoFrobeniusNumbers(       
>  rec(pseudo_frobenius := pf, max_attempts := 100));time;        
fail
22
\end{verbatim}
\end{example}
\appendix
\section{An approach based on irreducible numerical semigroups}
\label{sec:computing_arbitrary_type}

We present here an alternative way to compute the set of numerical semigroups with a given set of pseudo-Frobenius numbers. In general, this procedure is slower than the presented above, though we have not been able to characterize when this happens. We include it the manuscript since it was the initial implementation and was used to test the other one.

A numerical semigroup is \emph{irreducible} if it cannot be expressed as the intersection of two numerical semigroups properly containing it. It turns out that a numerical semigroup $S$ is irreducible if and only if either $\PFoper(S)=\{\mathrm F(S)\}$ or $\PFoper(S)=\{\mathrm F(S)/2,\mathrm F(S)\}$ (see \cite[Chapter 3]{NS}). Irreducible numerical semigroups can be also characterized as those maximal (with respect to set inclusion) numerical semigroups in the set of all numerical semigroups with given Frobenius number.

The maximality of irreducible numerical semigroups in the set of all numerical semigroups with given Frobenius number implies that every numerical semigroup is contained in an irreducible numerical with its same Frobenius number. Actually, we can say more.

\begin{lemma}\label{lemma:puedo-cubrir-f2}
Let $S$ be a numerical semigroup. There exists an irreducible numerical semigroup $T$ such that
\begin{enumerate}
\item $\mathrm F(S)=\mathrm F(T)$,
\item $]\mathrm F(S)/2,\mathrm F(S)[\cap \PFoper(S)\subset T$.
\end{enumerate}
\end{lemma}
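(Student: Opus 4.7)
Let $F = \mathrm F(S)$ and set $A = ]F/2, F[ \cap \PFoper(S)$. The plan is first to check that $T_0 := S \cup A$ is itself a numerical semigroup with Frobenius number $F$, and then to enlarge $T_0$, if necessary, to an irreducible numerical semigroup with the same Frobenius number.

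For the first step I would verify closure of $T_0$ under addition. The case in which both summands lie in $S$ is trivial. If $a \in A$ and $b \in S \setminus \{0\}$, then $a + b \in S$ by the very definition of pseudo-Frobenius number. If $a, b \in A$, then $a + b > 2 \cdot (F/2) = F$, so $a + b \in S$. Hence $T_0$ is a submonoid of $(\mathbb N, +)$. Since $F \notin S$ and $F \notin A$, $F$ remains a gap of $T_0$, while every integer larger than $F$ already belongs to $S \subseteq T_0$. Therefore $T_0$ is a numerical semigroup with $\mathrm F(T_0) = F$ and $A \subseteq T_0$.

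For the second step I would invoke the classical fact (see \cite[Chapter 3]{NS}) that the set of numerical semigroups with Frobenius number $F$, partially ordered by inclusion, is finite, and that its maximal elements are exactly the irreducible numerical semigroups with that Frobenius number. Any $T$ maximal in this poset above $T_0$ provides the desired irreducible extension. Equivalently, one may argue by iteration: whenever the current semigroup $T_i$ is not irreducible, it has some pseudo-Frobenius number $g$ with $F/2 < g < F$, and the same closure argument (now using $2g > F$ to conclude $2g \in T_i$) shows that $T_i \cup \{g\}$ is a numerical semigroup with Frobenius number $F$. Since the semigroup strictly grows at each step, the process terminates at an irreducible $T$ containing $T_0$.

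The one point that deserves care is ensuring that every element added during the enlargement is strictly above $F/2$: this is exactly what makes the closure condition $2g \in T_i$ automatic and what guarantees the Frobenius number is preserved. With this in hand, the chain $A \subseteq T_0 \subseteq T$ and the equality $\mathrm F(T) = F$ give the two required conclusions.
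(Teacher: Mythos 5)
Your proof is correct and follows essentially the same route as the paper: adjoin $]F/2,F[\,\cap \PFoper(S)$ to $S$, verify closure by the same three-case analysis, and then pass to a maximal (hence irreducible) numerical semigroup with Frobenius number $F$ above it. Your verification that $F$ remains the Frobenius number (simply noting $F\notin S\cup A$) is in fact more direct than the paper's, but the argument is the same in substance.
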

\begin{proof}
Let $f=\mathrm F(S)$ and let $F=]f/2,f[\cap \PFoper(S)$. We claim that $S'=S\cup F$ is a numerical semigroup with $\mathrm F(S')=f$. Take $s,s'\in S'\setminus\{0\}$. 
\begin{itemize}
\item If both $s$ and $s'$ are in $S$, then $s+s'\in S\subseteq S'$.
\item If $s\in S$ and $s'\in F$, then $s+s'\in S$, because $s'\in \PFoper(S)$.
\item If $s,s'\in F$, then $s+s'>f$, and so $s+s'\in S\subseteq S'$.
\end{itemize}
Let us show that $\mathrm F(S')=f$. Assume to the contrary that this is not the case, and consequently $f\in S'$. Then as all the elements in $F$ are greater than $f/2$, and $f\not\in S$, there must be an element $s\in S$ and $g\in \PFoper(S)$ such that $g+s=f$. But this is impossible, since all elements in $\PFoper(S)$ are incomparable with respect to $\le_S$ (Lemma~\ref{lemma:maximals}).

If $S'$ is not irreducible, then as irreducible numerical semigroups are maximal in the set of numerical semigroups with fixed Frobenius number, there exists $T$ with $\mathrm F(S)=\mathrm F(S')=\mathrm F(T)$ and containing $S'$; whence fulfilling the desired conditions.
\end{proof}

With all these ingredients we get the following result.

\begin{proposition}\label{prop:chain}
Let $S$ be a nonirreducible numerical semigroup with $\PFoper(S)=\{g_1<\dots <g_k\}$, $k\ge 2$. Then there exists a chain
\[S=S_0\subset S_1=S\cup\{x_1\}\subset \dots \subset S_l=S\cup\{x_1,\ldots,x_l\},\]
with
\begin{enumerate}
\item $S_l$ irreducible,
\item $x_i=\max (S_l\setminus S_{i-1})$ for all $i\in\{1,\ldots, l\}$,
\item $(g_k/2,g_k)\cap \PFoper(S)\subset S_l$, 
\item for every $i\in \{1,\ldots, l\}$, $x_i$ is a minimal generator of $S_i$ such that $g_j-x_i\in S_i$ for some $j\in \{1,\ldots,k\}$,
\item for every $i\in \{1,\ldots, l\}$ and $f\in \PFoper(S_i)$ with $f\neq g_k$, there exists $j\in \{1,\ldots,k-1\}$ such that $g_j-f\in S_i$.
\end{enumerate}
\end{proposition}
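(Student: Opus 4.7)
The plan is to obtain $S_l$ directly from Lemma~\ref{lemma:puedo-cubrir-f2} and then build the chain by peeling off elements in decreasing order. Apply Lemma~\ref{lemma:puedo-cubrir-f2} to obtain an irreducible numerical semigroup $T\supseteq S$ with $\mathrm F(T)=\mathrm F(S)=g_k$ and $(g_k/2,g_k)\cap \PFoper(S)\subseteq T$, and set $S_l:=T$. Since $S$ and $S_l$ share the same Frobenius number, $S_l\setminus S$ is finite; enumerate its elements in decreasing order as $x_1>x_2>\cdots>x_l$, and define $S_i:=S\cup\{x_1,\ldots,x_i\}$. Conditions (1), (2) and (3) are then immediate from the construction.

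To show that each $S_i$ is a numerical semigroup, I would argue by induction on $i$, the base case $i=0$ being $S$ itself. For the inductive step take $s,s'\in S_i\setminus\{0\}$. If both lie in $S_{i-1}$, then $s+s'\in S_{i-1}\subseteq S_i$ by the induction hypothesis. Otherwise, without loss of generality $s=x_i$, so $s+s'\in S_l$ (since $x_i,s'\in S_l$) and $s+s'>x_i$; but $x_i=\max(S_l\setminus S_{i-1})$, so $s+s'\in S_{i-1}\subseteq S_i$.

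For (4), $x_i$ is a minimal generator of $S_i$: if $x_i=a+b$ with $a,b\in S_i\setminus\{0\}$, then $a,b<x_i$ forces $a,b\in S_{i-1}$, hence $x_i=a+b\in S_{i-1}$, contradicting $x_i\notin S_{i-1}$. The existence of $j\in\{1,\ldots,k\}$ with $g_j-x_i\in S_i$ follows from Lemma~\ref{lemma:maximals}(ii) applied to the gap $x_i$ of $S$, since $S\subseteq S_i$.

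The main obstacle is (5), specifically forcing $j<k$. Let $f\in\PFoper(S_i)$ with $f\neq g_k$; since $g_k=\mathrm F(S_i)$, we have $f<g_k$. The key observation is that $g_k-f\notin S_i$: otherwise $g_k=f+(g_k-f)$ would belong to $S_i$ by definition of pseudo-Frobenius number, contradicting that $g_k$ is a gap of $S_i$. In particular $g_k-f\notin S$. On the other hand $f$ is a gap of $S$, so Lemma~\ref{lemma:maximals}(ii) yields some $g_j\in\PFoper(S)$ with $g_j-f\in S\subseteq S_i$; the case $j=k$ is ruled out by the observation above, so $j\in\{1,\ldots,k-1\}$, as required.
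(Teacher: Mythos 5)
Your proof is correct and follows essentially the same route as the paper's: take $T$ from Lemma~\ref{lemma:puedo-cubrir-f2}, climb from $S$ to $T$ by adjoining $x_i=\max(T\setminus S_{i-1})$ at each step, and use Lemma~\ref{lemma:maximals}(ii) to verify (4) and (5). The only differences are cosmetic: you prove directly the facts the paper cites from \cite[Lemma 4.35]{NS} (each $S_i$ is a semigroup with $x_i$ a minimal generator), and your justification that $j\neq k$ in (5) (via $g_k-f\notin S_i$) is more explicit than the paper's.
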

\begin{proof}
Let $T$ be as in Lemma~\ref{lemma:puedo-cubrir-f2}. Construct a chain joining $S$ and $T$ by setting $S_0=S$ and $S_i=S_{i-1}\cup \{x_i\}$, with $x_i=\max(T\setminus S_{i-1})$. Then $S_i$ is a numerical semigroup, and $x_i$ is a minimal generator of $S_i$ and a pseudo-Frobenius number for $S_{i-1}$ (\cite[Lemma 4.35]{NS}). Since the complement $T\setminus S$ is finite, for some $k$, $S_k=T$. 

Clearly, $x_1=g_k$, and thus $x_1-g_k=0\in S$. Now let $i\in \{2,\ldots,k\}$. Then $x_i\in T\setminus S$, and by Lemma~\ref{lemma:maximals}, there exists $j\in\{1,\ldots, k\}$ such that $g_j-x_i\in S\subseteq S_i$. 

Take $f\in \PFoper(S_i)\setminus\{g_k\}$. Then $f\not\in S$ and thus $g_j-f\in S$ for some $j\in \{1,\ldots,k\}$ (Lemma~\ref{lemma:maximals} once more). Consequently $g_j-f\in S_i$. Notice that $j\neq k$, since $f-g_k<0$.
\end{proof}

Given a candidate set \PFsf\ of pseudo-Frobenius numbers with maximum element $f$, we can use the above procedure to construct from the set of all irreducible numerical semigroups with Frobenius number $f$, the set of all numerical semigroups having \PFsf\ as a set its pseudo-Frobenius numbers. In order to compute the set of all irreducible numerical semigroups with Frobenius number $f$ we use implementation of the procedure presented in \cite{br} that is already part of \cite{numericalsgps}. We have slightly modified the algorithm in \cite{br} to compute the set of irreducible numerical semigroups containing a given set of integers, and these integers are the first component of  \ForcedIntegers(\PFsf). For every irreducible element in the list we then remove those minimal generators fulfilling condition (4) in Proposition \ref{prop:chain}. We add to our list of semigroups the semigroups obtained in the preceding step for which condition (5) holds, and then we proceed recursively.

\begin{example}
Let us illustrate the above procedure with $\PFsf=\{10,13\}$. The number of irreducible numerical semigroup with Frobenius number $13$ is $8$. However, if we first call $\ForcedIntegers$ we get:

\begin{verbatim}
gap> ForcedIntegersForPseudoFrobenius([10,13]);
[ [ 1, 2, 3, 5, 6, 10, 13 ], [ 0, 7, 8, 11, 12, 14 ] ]
\end{verbatim}

Since we have modified the function \texttt{IrreducibleNumericalSemigroupsWithFrobeniusNumber} to output only those irreducible numerical semigroups containing $\{0,7,8,11,12,14\}$, we obtain only two irreducible numerical semigroups: $S_1=\left< 4, 7, 10 \right>$ and $S_2=\langle   7, 8, 9, 10, 11, 12 \rangle$.

For $S_1$ the only minimal generator that fulfills the conditions in Proposition \ref{prop:chain} is $10$. If we remove $10$ from $S_1$, we obtain $T_1=\langle 4,7,17\rangle$, which already has the desired set of pseudo-Frobenius number. 

As for $S_2$, again $10$ is the only minimal generator fulfilling the conditions in Proposition \ref{prop:chain}, and we obtain $T_2=\langle 7, 8, 9, 11, 12\rangle$. This semigroup has pseudo-Frobenius number set equal to $\PFsf$, and so, as with $T_1$ we do not need to look for new minimal generators to remove.

Thus, the only numerical semigroups with pseudo-Frobenius number set $\{10, 13\}$ are $T_1$ and $T_2$.
\end{example}

%
%
%
%
%
%
%



\end{document}